     \theoremstyle{plain}
     \newtheorem{theorem}{Theorem}[section]
     \newtheorem{lemma}[theorem]{Lemma}
     \newtheorem{Lemma}[theorem]{Lemma}
     \newtheorem{corollary}[theorem]{Corollary}
     \newtheorem{proposition}[theorem]{Proposition}
\newtheorem{Theorem}{Theorem}[section]
\newtheorem{Proposition}[Theorem]{Proposition}
\theoremstyle{definition}
\newtheorem{definition}[theorem]{Definition}
\newtheorem{example}[theorem]{Example}
\newcommand{\R}{\mathbb R}
\newcommand{\Rk}{\mathbb R^k}
\newcommand{\C}{\mathbb C}
\newcommand{\Z}{\mathbb Z}
\newcommand{\Zk}{\mathbb Z^k}
\newcommand{\Q}{\mathbb Q}
\newcommand{\T}{\mathbb T}
\newcommand{\Tk}{\mathbb T^k}
\newcommand{\Rm}{\mathbb R^m}
\newcommand{\Rmx}{\mathbb R^m_x}
\newcommand{\Ci}{C^{\infty}}
\newcommand{\CI}{C^{\infty}}
\def \ta{{\tilde \alpha}}
\def \tr{{\tilde \rho}}
\def \tp{{\tilde \phi}}
\def \tm{{\tilde \mu}}
\def \tl{{\tilde \lambda}}
\def \a{\alpha}
\def \G{\Gamma}
\def \e{\varepsilon}
\def \A{(\EuScript{A})}
\def \M{{M}}
\def \S{\cal{S}}
\def \r{\cal{R}}
\def \dim{\mbox{dim}\,}
\def \d{\mbox{dist}\,}
\def \ker{\mbox{ker}}
\def \d{\mbox{dist}}
\def\Proof{{\em Proof}\,: }
\def\QED{~\hfill~ $\diamond$ \vspace{7mm}}
\def \A{\cal A}
\def \Rk {{\mathbb R}^k}
\def \rk {{\mathbb R}^k}
\def \Rm {{\mathbb R} ^m}
\def \T {{\mathbb T}}
\def \C {{\mathbb C }}
\def \c{{\cal C}}
\def \o{{\cal O}}
\def \R{{\mathbb R}}
\def \Z{{\mathbb Z}}
\def \Zk{{\mathbb Z} ^k}
\def \Q{{\mathbb Q}}
\def \la{{\lambda}}
\def \be{{\bar E}}
\def \w{{\cal W}}
\def \ws{{\cal W} ^s}
\def \ci{C^{\infty}}
\definecolor{rjs}{rgb}{.0,.5,.9}
\definecolor{orange}{rgb}{1,0.5,0}
\definecolor{my}{rgb}{0.8,0.4,0.6}
\begin{document}
\author[David Fisher, Boris Kalinin,  Ralf Spatzier]
{David Fisher, Boris Kalinin,  Ralf Spatzier}

\title[Totally non-symplectic Anosov actions]
{Totally non-symplectic Anosov actions on tori and nilmanifolds}

\thanks{First author supported in part by NSF grant DMS-0643546. Second author supported in part by NSF grant DMS-0701292.  Third author supported in part by NSF grants DMS-0906085  and DMS-0604857.}

\address{Department of Mathematics,
Indiana University, Bloomington, IN 47405}

\email{fisherdm@indiana.edu}

 \address{Department of Mathematics \& Statistics,
 University of South Alabama, Mobile, AL 36688}

 \email{kalinin@jaguar1.usouthal.edu}

\address{Department of Mathematics, University of Michigan, Ann Arbor,
MI 48109.}

\email{spatzier@umich.edu}

\date{\today}

\maketitle

\begin{abstract}

We show that sufficiently irreducible totally non-symplectic Anosov actions of higher rank abelian groups on tori and nilmanifolds are  $\ci$-conjugate to
actions by affine automorphisms.

\end{abstract}

\section{Introduction}

Hyperbolic actions of abelian groups of rank at least 2 exhibit many surprising rigidity properties.  Case in point is the local smooth rigidity of actions by automorphisms of tori and nilmanifolds and other algebraically defined actions.     This  means that perturbations of an action that are $C^1$-close for a finite set of generators are $\ci$-conjugate to the original action.   It was
established for algebraic actions with semisimple linear part  by Katok and Spatzier in
\cite{KS94}  and for some non-semisimple actions on tori by Einsiedler and T. Fisher \cite{EiFi}.  The higher rank situation  is entirely different from the case of single
Anosov diffeomorphisms and flows for which it is always easy to construct $C^1$-small perturbations which are not even  $C^1$-conjugate.

Local smooth rigidity of algebraic actions gives strong support  to the following conjecture by Katok and Spatzier.
\vspace{1em}

{\sl Classification Conjecture: All ``irreducible''  Anosov $\Z ^k$ and $\R ^k$-actions for $k \geq 2$ on any compact manifold
are $\ci$-conjugate to algebraic actions.}
\vspace{1em}

Kalinin and Spatzier proved this conjecture for the special class of Cartan actions of abelian groups of rank at least 3 under some other more technical hypotheses \cite{KaSp}.   Here we call an action {\em Cartan} if maximal intersections of stable manifolds of various elements, called {\em coarse Lyapunov foliations}, are one-dimensional and, together with the orbit, span the space. Kalinin and Sadovskaya have results for  more general Anosov actions  of rank at least 2 where the condition on dimension 1 is replaced by  either uniform quasi-conformality or a pinching condition \cite{KaSa3,KaSa4}. The basic  idea of the proofs in all of these results is to build smooth structures  on various foliations and then combine them.  Unfortunately, this only works under strong assumptions on the action.

The general case of the conjecture remains out of reach.  Thus it is natural to restrict attention to actions on tori and nilmanifolds where one usually refers to  the conjecture  as {\em global rigidity}.   For these spaces, the classical results of Franks and Manning \cite{Fr,M} offer a different  approach.   Their work  implies that any action $\alpha$  of an abelian group with at least one Anosov element  on a torus or a nilmanifold is always $C^0$-conjugate to an action by affine Anosov automorphisms by some H\"{o}lder conjugacy $\phi$.   Now to prove global rigidity it suffices to show smoothness of the conjugacy $\phi$.
We call the latter action the {\em linearization} of $\alpha$ and refer to Section 2 for a precise definition.  On the torus the linearization is essentially given by the induced action on homology. Note that in the nilmanifold case, the term ``linearization'' is a bit of a misnomer as the action by automorphisms is not really linear.

The idea that a $C^0$ conjugacy can be used to get $\ci$-rigidity  appears already in Hurder's work  on deformation rigidity of lattice actions on tori \cite{H1} and later in Katok-Lewis \cite{KaLe} for both their local and global rigidity theorems for Cartan actions on tori. It also formed the basis of the argument for local rigidity in Katok-Spatzier \cite{KS94}. In the different context of local rigidity of algebraic actions of lattices in higher rank groups,  work of Katok and Spatzier and later Fisher, Margulis and Qian \cite{FiMa, KS94, MaQi} also involves finding a $C^0$ conjugacy that is improved to $\ci$ using the presence of higher rank abelian subgroups
in the acting group.  Rodriguez Hertz established global rigidity for  $\Zk$ actions on tori with at least one Anosov element whose linearization has
coarse Lyapunov foliations of dimensions one or two and either has maximal rank or
 satisfies additional bunching assumptions \cite{RH}.   To date however,  all results require that the derivatives of either the action or its linearization along the coarse Lyapunov foliations satisfy a pinching assumption.  This means  that the ratio of maximal over minimal contraction is controlled, e.g. less than 2.
In this paper, we overcome this problem for the first time by a combination of the use of  non-stationary normal forms and  holonomy arguments.
 Beyond achieving a superior result, the use of the two tools is also completely novel.
We use limits of holonomy maps to define homogeneous structures on certain foliations. This has never been done before.  Furthermore, we make use of measurable normal forms for the first time in the context of
global rigidity of actions. Previously measurable normal forms have only been used to study invariant measures.

Continuous normal forms were already introduced for the proof of local rigidity in \cite{KS94}.   In essence they give coordinate charts in which  the derivatives of the map along contracting foliations take values in a finite dimensional Lie group.  Moreover, the dependence of the coordinates on the base point is continuous in the $\ci$-topology.  Existence of continuous normal forms  is guaranteed if the derivatives of the maps under consideration satisfy a spectral gap condition along the given contracting foliation.  While such spectral gaps are automatic for $C^1$-perturbations of algebraic  systems and also for one dimensional foliations, they fail to hold in general.  In particular we cannot assume such spectral gaps for the proof of global rigidity. Instead, we use a measurable   version of the  non-stationary normal forms theory where the ``measurable'' spectral gap condition is always satisfied by Oseledec'  Multiplicative Ergodic Theorem.

Let us next summarize some elements from the structure theory of higher rank abelian actions, see Section \ref{preliminaries} for more details.   They preserve a probability measure of full support.  One can find a common Lyapunov splitting of the tangent bundle $TM = \oplus _i  E_i$  which refines the Lyapunov splittings of each individual element.  Moreover,  if $v \in E_i$, the Lyapunov exponent of $v$ defines a linear functional, the {\em Lyapunov functional}, on the acting $\Z^k$ which we think of as a linear functional on the ambient $\R^k$.
For  actions by affine automorphisms
the Lyapunov exponents are nothing but the logarithms of the absolute values of the eigenvalues of the automorphisms.  A {\em Weyl chamber} is a connected component of $\R^k$ minus all the hyperplane kernels of the Lyapunov functionals.   We will need to make the assumption that every Weyl chamber defined by the linearization contains an Anosov element in the non-linear action. As we will later see that the Weyl
chambers on the two sides agree, we abbreviate this by saying   that every Weyl chamber contains an Anosov element. This allows us to define the {\em coarse Lyapunov foliations} as the maximal intersections of stable foliations of Anosov elements.  Hence these foliations are H\"{o}lder with smooth leaves.

Recall that a matrix is semisimple if it is diagonalizable  over $\C$.
We call an action by affine automorphisms of a nilmanifold {\em semisimple}
if the linear part of every element acts by a semisimple matrix.

Finally, we call a $\Z ^k$-action {\em TNS} or {\em totally non-symplectic} if any two $v \in E_i$ and $w \in E_j$ belong to the stable distribution of some element $a \in \Z ^k$.  This excludes the possibility of a bilinear form invariant under the action, hence the name.

The main result of this paper  proves global rigidity for  totally non-symplectic actions.


\begin{theorem}\label{nilmanifold}  Suppose $\alpha $ is a $\ci$-action of $\Z ^k$, $k \geq 2$ on a nilmanifold $N/\Gamma$.  Assume the linearization  $\rho$ of $\alpha$   is  semisimple and TNS and there is an Anosov element in each Weyl chamber of $\alpha$.  Then $\alpha$ is $\ci$-conjugate to $\rho$.
\end{theorem}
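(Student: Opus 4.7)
The plan is to start from the H\"older conjugacy $\phi$ between $\a$ and its linearization $\rho$ produced by Franks--Manning, and to upgrade it to $\ci$. Since the Weyl chambers and coarse Lyapunov splitting of $\rho$ match those of $\a$ under $\phi$ (a standard consequence of having an Anosov element in every Weyl chamber), each coarse Lyapunov distribution $E_\chi$ of $\a$ integrates to a H\"older foliation $\w_\chi$ with smooth leaves, and $\phi$ carries $\w_\chi$ to its algebraic counterpart for $\rho$. It therefore suffices to show that $\phi$ is smooth along each $\w_\chi$: an iterated Journ\'e-type patching lemma, applied to a spanning collection of transverse coarse Lyapunov foliations supplied by TNS, will then assemble leaf-wise smoothness into global smoothness.

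The heart of the argument, as announced in the introduction, is to equip the leaves of every $\w_\chi$ with a homogeneous structure modeled on a nilpotent Lie group, in such a way that $\phi$ intertwines these structures with the algebraic ones carried by the foliations of $\rho$. The first step is to apply the \emph{measurable} non-stationary normal forms theorem along the action of a regular $a\in\Z^k$ which is strongly contracting on $E_\chi$: Oseledec's multiplicative ergodic theorem furnishes, on a full-measure set, the spectral gap needed to put $a|_{\w_\chi}$ into normal form. This gives smooth charts on leaves of $\w_\chi$ in which $a$ acts by a polynomial map of a fixed nilpotent Lie group, with transitions between charts at nearby base points being affine. In contrast to the continuous normal forms of \cite{KS94}, a continuous spectral gap is not available, so these charts exist only on a measurable set.

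The main obstacle, and where the novelty of the paper lies, is to upgrade these measurable charts to a continuous homogeneous structure on all of $\w_\chi$. The plan is to transport the normal-form coordinates between typical points using holonomy along other coarse Lyapunov foliations $\w_{\chi'}$: the TNS hypothesis guarantees, for any such $\chi'$, an element of $\Z^k$ that uniformly contracts $E_\chi\oplus E_{\chi'}$, so the holonomies are well-defined, smooth along leaves, and equivariant under the action. Uniqueness in the normal-form theorem then forces each such holonomy to be affine in the normal-form coordinates. Taking appropriate limits of compositions of these holonomies along sequences produced by other regular elements of the action, one promotes the measurable affine structure to a continuous, indeed $\ci$-along-leaves, homogeneous structure on $\w_\chi$ preserved by the full $\Z^k$ action; because $\rho$ is semisimple, this structure agrees with the algebraic one on the corresponding coarse Lyapunov foliation for $\rho$.

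Once these homogeneous structures are in place on both sides and $\phi$ is seen to intertwine the actions on leaves, the restriction of $\phi$ to each leaf is, up to translation, an isomorphism between nilpotent Lie groups and is in particular smooth. Feeding this into the iterated Journ\'e argument, with the transversality and spanning of partial sums of $E_\chi$'s guaranteed by TNS, promotes $\phi$ to a global $\ci$ diffeomorphism, proving Theorem~\ref{nilmanifold}. The step I expect to be hardest is precisely the upgrade from measurable to continuous homogeneous structures: it is where the two new techniques, measurable normal forms and limits of holonomies, have to be combined, and where the TNS hypothesis enters in an essential way to keep every needed holonomy well-defined and equivariant.
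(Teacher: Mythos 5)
Your proposal follows the paper's overall route (Franks--Manning conjugacy, measurable normal forms along coarse Lyapunov leaves, holonomies that transport them, limits of holonomies giving a smooth transitive group action on typical leaves, then leafwise-to-global smoothness), but the two load-bearing steps are asserted rather than proved, and as stated one of them is false. You claim that because TNS supplies an element of $\Z^k$ contracting $E_\chi\oplus E_{\chi'}$, the holonomies along $\w_{\chi'}$ between leaves of $\w_\chi$ are automatically smooth. They are not: the holonomy of an invariant subfoliation inside a stable foliation is in general only H\"older. Smoothness requires a domination, namely an element that contracts the holonomy foliation \emph{strictly faster} than it contracts $\w_\chi$, so that $\w_{\chi'}$ is the fast (strong stable) subfoliation inside the joint stable leaf. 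Producing such an element is precisely where the paper does real work and where the absence of a pinching hypothesis is overcome: one first shows, using the bi-H\"older conjugacy to the algebraic model together with a subadditive-ergodic argument over \emph{all} invariant measures, that elements of $\R^k$ close to the Lyapunov hyperplane $\ker\chi$ expand and contract $E_\chi$ at a uniformly small exponential rate, and only then combines such an element with a suitable Anosov element to obtain the required fast/slow splitting. Without this uniform estimate your holonomies are merely H\"older and cannot be used to transport or preserve normal forms, so the whole upgrade from measurable to smooth structure collapses.

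The second gap is the mechanism by which holonomies respect the normal forms. You invoke equivariance of the holonomies under the action plus uniqueness of normal forms to conclude they are affine; but holonomies along $\w_{\chi'}$ do not commute with the nonlinear action in general, and preservation of normal forms is not a consequence of uniqueness alone. What actually works is a centralizer argument: on the algebraic side, semisimplicity guarantees that for each coarse Lyapunov constituent of the transverse foliation and suitable $v$ in its Lyapunov hyperplane there are subgroups $\mathbf{H}_{v,i}$ of $\mathbf{W}_1$ whose translations commute with an element $\tr(t_i v)$ that acts isometrically on that constituent and contracts $\w^*_\chi$; pushing through $\tp$, the corresponding nonlinear holonomies commute with $\ta(t_i v)$, and the centralizer part of the measurable normal-form theorem then shows they preserve a fixed normal form. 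Since these subgroups generate $\mathbf{W}_1$, all holonomies do, and the limit argument (density of a $\w_1$-leaf by unique ergodicity, Lusin sets, and the fact that in normal-form coordinates the holonomies lie in a fixed finite-dimensional Lie group) produces the smooth transitive action of $\mathbf{W}$ on a typical leaf. So semisimplicity is used here, not, as you place it, at the end to match structures. Two smaller points: the paper finishes with an elliptic-operator argument rather than Journ\'e (either is acceptable once one has smoothness along all leaves with transversally continuous derivatives, which itself uses the smooth holonomies), and to get a diffeomorphism one still must check that $D\phi^{-1}$ is nondegenerate, which follows from $\mu=\phi^{-1}_\ast(\lambda)$ having smooth positive density.
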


As discussed above, this theorem is the first that does not require pinching  conditions.  Moreover,  it also yields  the first global rigidity result for Anosov actions on nilmanifolds  which are not tori.  Indeed, in all earlier results the pinching condition, together with various
additional assumptions such as integrability or absence of certain resonances, forced the nilmanifold to be a torus.

Call a linear $\mathbb Z^k$ action on a torus {\em totally reducible} if
every rational invariant torus has a rational invariant complement.   There is a similar though more complicated notion for nilmanifolds which we describe below in section \ref{section:generalresults}.
 We will show that total reducibility is equivalent to semisimplicity,
and thus we immediately get the next result:

\begin{corollary}
 Suppose $\alpha $ is a $\ci$-action of $\Z ^k$, $k \geq 2$ on a nilmanifold $N/\Gamma$.  Assume the linearization  $\rho$ of $\alpha$   is totally reducible  and TNS and there is an Anosov element in each Weyl chamber of $\alpha$.
 Then $\alpha$ is $\ci$-conjugate to  $\rho$.
 \end{corollary}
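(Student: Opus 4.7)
The plan is to reduce the corollary to Theorem \ref{nilmanifold} by establishing the equivalence
\[
\rho \text{ totally reducible} \iff \rho \text{ semisimple},
\]
which is announced in the introduction and promised to be proved in Section \ref{section:generalresults}. Given this equivalence, the corollary is immediate: the hypotheses of the corollary are exactly the hypotheses of Theorem \ref{nilmanifold}, so $\alpha$ is $\ci$-conjugate to $\rho$.

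For the torus case $N/\Gamma = \T^n$, the linearization $\rho$ is a commuting family of integer matrices in $GL(n,\Z)$. Rational invariant subtori correspond bijectively to rational $\rho$-invariant subspaces $V \subset \Q^n$, and rational invariant complementary subtori correspond to rational $\rho$-invariant complements $W$ with $V \oplus W = \Q^n$. Thus total reducibility is exactly complete reducibility of the $\Z^k$-representation on $\Q^n$. Over a field of characteristic zero, complete reducibility of a representation of an abelian group (or more generally a group generated by commuting elements) is equivalent to each individual element acting by a semisimple linear map, i.e.\ being diagonalizable over $\C$. The forward direction uses that if $\rho(a)$ had a non-trivial Jordan block, the corresponding generalized eigenspace would contain a proper invariant subspace with no invariant complement; the reverse direction is the standard fact that simultaneously diagonalizable actions split over any invariant rational subspace by restricting the joint eigenspace decomposition (Galois-averaging to descend from $\C$ to $\Q$ if necessary).

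For the nilmanifold case, one works on the Lie algebra $\n$ of $N$. A rational invariant subnilmanifold corresponds to a rational $d\rho$-invariant Lie subalgebra $\h \subset \n$, and the appropriate notion of a rational invariant complement (to be spelled out in Section \ref{section:generalresults}) corresponds to a rational $d\rho$-invariant vector space complement which is itself a Lie subalgebra. The key point is that the derived series and lower central series are canonical rational $d\rho$-invariant ideals, so total reducibility forces the existence of compatible invariant complements at each step. Iterating along this filtration reduces the problem to the abelian quotients, where the torus argument above applies, yielding semisimplicity of each $d\rho(a)$. Conversely, if every $d\rho(a)$ is semisimple, then for any rational invariant Lie subalgebra $\h$, the Galois-averaged joint eigenspace decomposition produces a rational $d\rho$-invariant vector space complement; one then has to verify that this complement can be chosen to be a Lie subalgebra, using that the bracket is compatible with the weight space decomposition.

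The main obstacle is this last point in the nilmanifold case: producing an invariant Lie subalgebra complement, not merely a vector space complement. For tori the argument is pure linear algebra, but on a nilmanifold one must track compatibility of the weight decomposition with the nilpotent Lie bracket, which is where the structure theory of Section \ref{section:generalresults} must be invoked. Once this equivalence is in hand, the corollary follows by a single line from Theorem \ref{nilmanifold}.
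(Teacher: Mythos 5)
Your overall strategy --- reduce the corollary to Theorem \ref{nilmanifold} by showing that total reducibility implies semisimplicity --- is exactly the paper's, and your torus argument is essentially fine (one small repair: the proper invariant subspace witnessing failure of total reducibility should be the rational span of all eigenspaces, i.e.\ $\ker \prod P_i(\rho(a))$, not a subspace of a single generalized eigenspace, which is in general not rational; the paper instead argues that irreducible torus actions are semisimple via the nontrivial rational invariant subspace $\bigcap E(A_i)$ and then decomposes a totally reducible action into irreducibles). The genuine gap is in the nilmanifold case, and it is twofold. First, you have guessed the wrong definition of total reducibility for nilmanifolds: the paper does \emph{not} ask that every rational invariant subalgebra admit an invariant complement that is again a Lie subalgebra. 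Indeed no subalgebra complement to $[\mathfrak n,\mathfrak n]$ can exist when $\mathfrak n$ is nonabelian, since a subalgebra $W$ with $W+[\mathfrak n,\mathfrak n]=\mathfrak n$ generates, hence equals, $\mathfrak n$; under your reading the hypothesis would be vacuous beyond tori. The paper's definition requires only that the maximal toral quotient action be totally reducible and that $[\mathfrak n,\mathfrak n]$ have a $\Z^k$-invariant \emph{linear} complement $V$ in $\mathfrak n$.

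Second, your proposed reduction ``iterate along the lower central series to the abelian quotients'' does not yield semisimplicity on $\mathfrak n$: an action can be semisimple on every graded piece of an invariant filtration and still fail to be semisimple (think of a unipotent block). The decisive step, which you defer to ``the structure theory of Section \ref{section:generalresults}'' and thereby assume what is to be proved, is the paper's short argument: the action on the invariant complement $V$ is identified with the maximal toral quotient action, hence is semisimple by the torus case; joint eigenvectors of $\Z^k$ in $V\otimes\C$ then generate $\mathfrak n\otimes\C$, and brackets of joint eigenvectors are again joint eigenvectors, so they span $\mathfrak n\otimes\C$ and every element acts semisimply. Note also that only the implication ``totally reducible $\Rightarrow$ semisimple'' is needed for the corollary; the obstacle you single out as the main one (producing invariant \emph{subalgebra} complements) belongs to the converse implication, which the corollary does not require.
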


To prove the corollary from the theorem, we prove that any totally
reducible action is semisimple.

Our results have some applications to global rigidity for actions of
higher rank lattices.  Margulis and Qian prove that any
Anosov action of a higher rank lattice $\G$ on a nilmanifold with a
common fixed point for the entire group action is continuously
conjugate to an action by affine automorphisms\cite{MaQi}. It is well known that
 such $\G$ contains many abelian subgroups isomorphic
 to $\Z^k$, where $k$ is the real rank of
$\G$, and that the Anosov $\G$ action restricts to an Anosov $\Z^k$
action. If  some $\Z^k$ subgroup satisfies the
conditions of Theorem \ref{nilmanifold}, it then follows from our
results that the conjugacy is smooth, and therefore that the full
$\G$ action is smoothly conjugate to an action by affine automorphisms.

Let us briefly indicate the main elements in the proof of Theorem \ref{nilmanifold}.  As discussed above we show that the topological conjugacy $\phi$ is smooth.  For this,
we first suspend the $\Z^k$-action to an $\R^k$-action.  Then we fix a coarse Lyapunov
foliation and for almost every leaf we construct a transitive group of smooth transformations
which is intertwined by $\phi$ with the group of translations of the corresponding leaf
for the linearization.  As in other proofs of rigidity theorems e.g. in \cite{KS94}, we use limits of return maps.  Unlike earlier proofs however, we do not directly use the acting group but rather holonomies along transversal coarse Lyapunov foliations.  First  we show that  these holonomies are smooth. For this we establish existence of elements which contract the fixed coarse Lyapunov foliation slower than a
transversal one. Then we show that the holonomies centralize suitable elements of $\R^k$ and hence preserve measurable non-stationary normal forms. It follows that limits of
such holonomies are still smooth and define the desired transitive group actions.
Once the smoothness of $\phi$ is established for a.e. leaf of each coarse Lyapunov
foliation, the smoothness of holonomies gives the global smoothness of $\phi$.
A more detailed outline of the proof is given in Section \ref{outline}, after all
relevant notions have been defined.

We would like to thank K. Burns, D.Dolgopyat, F.Ledrappier Y. Pesin, J. Rauch and A. Wilkinson for a number of  discussions on subjects related to this paper.












\section{Preliminaries}   \label{preliminaries}

Throughout the paper, the smoothness of diffeomorphisms, actions,
and manifolds is assumed to be $\Ci$, even though all definitions and
some of the results can be formulated  in lower regularity.

\subsection{Anosov actions of $\,\Zk$ and $\Rk$}

\label{Anosov actions} $\;$
\vskip.1cm

Let $a$ be a diffeomorphism of a compact manifold $\M$.
We recall that  $a$ is  {\em Anosov} if there exist a continuous $a$-invariant
decomposition  of the tangent bundle $T\M=E^s_a \oplus E^u_a$ and constants
$K>0$, $\lambda>0$ such that for all $n\in \mathbb N$
\begin{equation} \label{anosov}
  \begin{aligned}
 \| Da^n(v) \| \,\leq\, K e^{- \lambda n} \| v \|&
     \;\text{ for all }\,v \in E^s_a, \\
 \| Da^{-n}(v) \| \,\leq\, K e^{- \lambda n}\| v \|&
     \;\text{ for all }\,v \in E^u_a.
 \end{aligned}
 \end{equation}
The distributions $E_a^s$ and $E_a^u$ are called the {\em stable} and {\em unstable}
distributions of  $a$.
\vskip.2cm

Now we consider a $\Zk$ action $\alpha$ on a compact manifold $\M$
via diffeomorphisms.
The action is called {\em Anosov}$\,$ if there is an element which acts as an
Anosov diffeomorphism. For an element $a$ of the acting group we denote
the corresponding diffeomorphisms by $\a (a)$ or
simply by $a$ if the action is fixed.

For a $\Zk$ action $\alpha$ on a  manifold $M$, there is an associated
$\Rk$ action $\ta$ on a manifold $\S$ given by the standard suspension
construction \cite{KaK}.   Briefly, this is the action of $\R ^k$ by left translations on
$(\R^k \times M)/{\Z ^k}$.  Here $(\R^k \times M)/{\Z ^k}$ is the quotient of $\R^k \times M$ by the $\Z^k$-action on $\R^k \times M$ given by $z (r,p) = (r -z, z (p))$. We will refer to $\ta$ as the {\em suspension} of $\a$.
It generalizes the suspension flow of a diffeomorphism. Similarly,
the manifold $\S$ is a fibration over the ``time" torus $\Tk$ with fiber $\M$.

\begin{definition} Let $\alpha$ be a smooth  action of $\,\Rk$
on a compact manifold $\M$. An element $a \in \Rk$ is called {\em Anosov}
or {\em normally hyperbolic} for $\alpha$ if there exist positive constants $\lambda$,
$K$ and a continuous $\alpha$-invariant splitting of the tangent bundle
$$
T\M = E^s _a \oplus E^u _a \oplus T\o
$$
where $T\o$ is the tangent distribution of the $\Rk$-orbits, and \eqref{anosov} holds
for all $n\in \mathbb N$.
\end{definition}

An $\Rk$ action is called {\em Anosov} if some element $a \in \Rk$ is Anosov.
Note that  $a \in \Zk$ is Anosov for $\a$ if and only if it is Anosov for $\ta$. Thus if
$\a$ is an Anosov $\Zk$ action then $\ta$ is an Anosov $\Rk$ action.

Both in the discrete and the continuous case it is well-known that the distributions
$E_a^s$ and $E_a^u$  are H\"{o}lder continuous and tangent
to the stable and unstable foliations $\w_a^s$ and $\w_a^u$ respectively \cite{HPS}.
The leaves of these foliations are $C^\infty$ injectively immersed Euclidean
spaces. Locally, the immersions vary continuously in the $\Ci$ topology.  In general, the
distributions $E^s$ and $E^u$ are only H\"older continuous transversally
to the corresponding foliations.

\subsection{Lyapunov exponents and coarse Lyapunov distributions}
\label{Lyapunov} $\;$
\vskip.1cm

First we recall some basic facts from the theory of non-uniform
hyperbolicity for a single diffeomorphism, see for example \cite{BP}. Then
we consider $\Zk$ and $\Rk$ actions concentrating on the
continuous time case on the case, we refer to \cite{KaSp} and
\cite{KaSa3} for more details.

Let  $a$ be a diffeomorphism of a compact manifold $\M$ preserving an
ergodic  probability measure $\mu$. By Oseledec' Multiplicative Ergodic
Theorem, there exist finitely many numbers $\chi _i $ and an invariant  measurable
splitting of the tangent bundle $T\M = \bigoplus E_i$ on a set of full measure
such that the forward and backward Lyapunov exponents of $v \in  E_i $
are $\chi _i $.  This splitting is called {\em Lyapunov decomposition}.
 We define the stable distribution of $a$ with respect to $\mu$
as $E^-_a= \bigoplus _{\chi _i <0}  E_i$. The subspace $E^-_a(x)$
is tangent $\mu$-a.e. to the stable manifold $W^-_a(x)$.
More generally, given any $\theta <0$ we can define the strong
stable distribution by $E^{\theta}_a= \bigoplus _{\chi _i \le \theta}  E_i$
which is tangent $\mu$-a.e. to the strong stable manifold $W^{\theta}_a(x)$.
$W^{\theta}_a(x)$ is a smoothly immersed Euclidean space. For a
sufficiently small ball $B(x)$, the connected component of
$W^{\theta}_a(x) \cap B(x)$, called {\em local} manifold, can
be characterized by the exponential contraction property
\begin{equation} \label{loc}
W^{\theta,loc}_a(x) = \{y \in B(x) \, | \; \d(a^n x, a^n y) \le C e^{(\theta +\e)n}
\quad \forall n\in \mathbb N \}.
\end{equation}
The unstable distributions and manifolds are defined similarly.
In general, $E^-_a$ is only measurable and depends on the measure $\mu$.
However, if $a$ is an  Anosov diffeomorphism, or an Anosov element of
an $\Rk$ action, then $E^-_a$ for any measure always agrees with the
continuous stable distribution $E^s_a$. Indeed, $E^s_a$ cannot contain
a vector with a nontrivial component in some $E_j$ with $\chi _j \geq 0$
since such a vector does not satisfy \eqref{anosov}.  Hence $E^s_a \subset \bigoplus _{\chi _i <0}  E_i$.  Similarly, the unstable distribution $E^u_a \subset \bigoplus _{\chi _i  > 0}  E_i$.  Since $E^s_a \oplus E^u_a $ is transverse
to the orbit, both inclusions have to be equalities.

Let $\mu$ be an ergodic probability measure for an $\Rk$ action $\alpha$
on a compact manifold $\M$. By commutativity, the  Lyapunov decompositions
for  individual elements of $\Rk$ can be refined to a joint invariant splitting for
the action. The following proposition from \cite{KaSp} describes the
Multiplicative Ergodic Theorem for this case. See  \cite{KaSa3} for the discrete
time version and \cite{KaK} for more details on the Multiplicative Ergodic
Theorem and related notions for higher rank abelian actions.

\begin{proposition} Let $\a$ be a smooth action of  $\Rk$ and let $\mu$ be an
ergodic invariant  measure.
There are finitely many linear functionals $\chi $ on $\Rk$, a set of
full measure ${\cal P}$, and an $\alpha$-invariant  measurable splitting
of the tangent bundle  $T\M =  T\o \oplus \bigoplus E_{\chi}$ over ${\cal P}$, where $\o$
is the orbit foliation, such that
 for all $a \in \Rk$ and $ v \in E_{\chi}$, the Lyapunov exponent of $v$ is $\chi
 (a)$, i.e.
$$
  \lim _{t \rightarrow \pm \infty }
  t^{-1} \log \|\big( D \a (ta) \big) (v) \| = \chi (a),
$$
 where $\| .. \|$ is a continuous norm on $T\M$.
\end{proposition}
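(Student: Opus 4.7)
The plan is to reduce the higher-rank statement to the classical Multiplicative Ergodic Theorem for a single transformation, then exploit commutativity to assemble the pieces. First, I would apply the continuous-time MET to the flow $\{\alpha(ta_1)\}_{t\in\mathbb{R}}$ generated by a single element $a_1 \in \mathbb{R}^k$. This produces an $\alpha(a_1)$-invariant measurable splitting $TM = T\mathcal{O} \oplus \bigoplus_i E_i^{(1)}$ on a full-measure set $\mathcal{P}_1$, with a finite collection of real Lyapunov exponents $\{\chi_i^{(1)}\}$.

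Next, for any other element $b \in \mathbb{R}^k$, the diffeomorphism $\alpha(b)$ commutes with $\alpha(a_1)$. Since the Lyapunov subspaces of $\alpha(a_1)$ are characterized by precise growth rates under iteration, and these growth rates are preserved by any commuting diffeomorphism, the differential $D\alpha(b)$ must carry each $E_i^{(1)}(x)$ into $E_i^{(1)}(\alpha(b)x)$ on a full-measure set. I would then pick a basis $a_1,\ldots,a_k$ of $\mathbb{R}^k$, apply MET successively to each $a_j$, and at each stage refine the current splitting by intersecting with the new Oseledec subspaces. Because of the commutativity observation, each refinement is $\alpha$-invariant, so after finitely many steps one obtains a joint $\alpha$-invariant measurable splitting $TM = T\mathcal{O} \oplus \bigoplus_\chi E_\chi$ on a full-measure set $\mathcal{P}$, together with numbers $\chi(a_j)$ for $j=1,\ldots,k$ giving the Lyapunov exponent of vectors in $E_\chi$ under each $\alpha(a_j)$.

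The remaining task is to verify that $a \mapsto \chi(a)$ extends to a linear functional on all of $\mathbb{R}^k$. For an integer combination $a = \sum n_j a_j$, the chain rule together with commutativity yields
\[
\log \|D\alpha(Na)(v)\| = \log \Big\|\prod_j D\alpha(a_j)^{Nn_j}(v)\Big\|,
\]
and dividing by $N$ and passing to the limit gives $\sum n_j \chi(a_j)$; this extends to rational combinations by considering $\alpha(a/q)$ and scaling. To obtain linearity for all real $a$, I would use tempered Pesin-block estimates: on each regular set, the norms $\|D\alpha(ta)|_{E_\chi}\|$ grow with asymptotic rate linear in $t$, and the smooth dependence of $\alpha$ on the parameter together with the cocycle identity forces the asymptotic rate to depend linearly on $a$ itself. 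Equivalently, one applies the continuous-time MET along each one-parameter subgroup $\{\alpha(ta)\}$ and checks that its Oseledec subspaces refine into the common splitting already produced, forcing the corresponding exponent to match $\chi$ evaluated at $a$.

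The main obstacle is precisely this last step: passing from a countable dense set of directions $a$ (on which linearity is automatic from the chain rule) to the continuous family of all $a \in \mathbb{R}^k$. The delicate point is that the pointwise MET limits need not be uniform, so one cannot simply take limits in $a$ inside the logarithm. The standard way around this, which I would follow, is to use that on each Pesin (or Lyapunov) regular set the comparison between the Euclidean norm and the tempered Lyapunov norm is controlled by a slowly varying function, allowing the growth rate in any direction $a$ to be identified by matching it against the rates for a dense set of rational directions inside the already-obtained finite-dimensional splitting.
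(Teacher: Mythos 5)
You should first note that the paper itself does not prove this proposition: it is quoted from [KaSp], with the details referred to [KaK], so the only fair comparison is with the standard argument given there — and your outline (one-parameter Oseledec theorem for each generator, invariance of Oseledec spaces under commuting diffeomorphisms via compactness of $M$, refinement to a joint splitting, then linearity of the exponent in $a$) is indeed that standard route.

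There is, however, a genuine gap at the crucial linearity step. From the cocycle identity $\log\|D\alpha(Na)v\|=\log\|D\alpha(Nn_k a_k)\cdots D\alpha(Nn_1a_1)v\|$ you cannot ``divide by $N$ and pass to the limit'' factor by factor: after applying $D\alpha(Nn_1a_1)$ the vector sits at the base point $\alpha(Nn_1a_1)x$, and the Oseledec comparison constants at that point are not controlled unless you know they are tempered along the $a_1$-orbit; for a merely measurable regularity function this is exactly what has to be proved, and it is the reason [KaK] constructs Lyapunov norms tempered with respect to the whole $\mathbb{Z}^k$-action rather than for one generator at a time. Moreover, the obstacle you single out as the main one — passing from rational to all real directions — is in fact the easy part: since $C:=\sup_{\|b\|\le 1}\sup_x\log\|D\alpha(b)_x\|<\infty$, the upper and lower growth rates in direction $a$ differ from those in direction $a'$ by at most $C\|a-a'\|$, so continuity in $a$ (with a single full-measure set) is automatic. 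The genuinely delicate point, which your sketch asserts rather than proves (``forcing the asymptotic rate to depend linearly on $a$''), is additivity itself. A clean way to close it, avoiding tempered norms: for a joint Oseledec space $E_\chi$ the top exponent $\chi^+(a)=\lim_{t\to\infty}t^{-1}\log\|D\alpha(ta)|_{E_\chi}\|$ exists a.e.\ by the subadditive ergodic theorem, is subadditive and positively homogeneous in $a$, hence convex; the bottom exponent $\chi^-(a)=-\chi^+(-a)$ is concave; by construction of the joint splitting $\chi^+$ and $\chi^-$ agree at $\pm a_1,\dots,\pm a_k$, and a nonnegative convex positively homogeneous function vanishing on a symmetric spanning set vanishes identically, so $\chi^+=\chi^-$ is both convex and concave, i.e.\ linear. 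With that lemma in place (or with $\mathbb{Z}^k$-tempered Lyapunov norms substituted for your single-generator Pesin blocks), the rest of your plan — including the minor bookkeeping of choosing one $\alpha$-invariant full-measure set — goes through.
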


The splitting  $ \bigoplus E_{\chi} $ is called the {\em Lyapunov decomposition},
and the linear functionals $\chi$ are called the {\em Lyapunov exponents} of
$\alpha$. The hyperplanes $\,\ker \,\chi \subset \Rk$
are called the {\em Lyapunov hyperplanes} or {\em Weyl chamber walls}, and the
connected components of
$\,\Rk - \cup _{\chi} \ker \chi $ are called the {\em Weyl chambers} of $\alpha$.
The elements in the union of the Lyapunov hyperplanes are called
{\em singular}, and the elements in the union of the Weyl chambers
are called {\em regular}.
We note that the corresponding notions for a $\Zk$ action and for its suspension are
directly related. In particular, the nontrivial Lyapunov exponents are the same.
In addition, for the suspension there is one identically zero Lyapunov exponent
corresponding to the orbit distribution. From now on, the term Lyapunov exponent
will always refer to the nonzero functionals.

Consider  a $\Zk$ action by {\em automorphisms} of a torus or a nilmanifold
$M=N/\G$.
In this case, the Lyapunov decomposition is determined by the eigenspaces of the
automorphisms, and the Lyapunov exponents are the logarithms of the moduli
of the eigenvalues. Hence they are independent of the invariant measure, and
they give uniform estimates of expansion and contraction rates. Also, every Lyapunov
distribution is smooth and integrable.

In the non-algebraic case, the individual Lyapunov distributions are in general
only measurable and depend on the given measure. This can be already
seen for a single diffeomorphism, even if Anosov. However, as we observed above,
the full stable distribution $E^s_a$ of an Anosov element $a$  always
agrees with $\bigoplus _{\chi  (a) <0}  E_\chi$ on a set of full measure for any  measure.

For higher rank actions, {\em coarse Lyapunov distributions} play a similar role to the stable and unstable distributions for an Anosov diffeomorphism.
For any Lyapunov functional $\chi$ the coarse Lyapunov distribution is the direct
sum of all Lyapunov spaces with Lyapunov functionals positively proportional to $\chi$:
$$
   E^{\chi} = \oplus E_{\chi '}, \quad \chi ' = c \, \chi \,\text{ with }\, c>0.
$$

For an algebraic action such a distribution is a finest nontrivial
intersection of the stable distributions of certain Anosov elements of the action.
For nonalgebraic actions, however, it is not a priori clear. It was shown in \cite[Proposition 2.4]{KaSp} that, in the presence
of sufficiently many Anosov elements, the coarse Lyapunov distributions are
well-defined, continuous, and tangent to foliations with smooth leaves
(see Proposition 2.2 in \cite{KaSa4} for the discrete time case).
We denote the set of all Anosov elements in $\Zk$ or $\rk$ by $\A$.

\begin{proposition} \label{CoarseLyapunov}
Let $\alpha$ be an Anosov action of $\Zk$ or $\Rk$ and let $\mu$ be an ergodic  probability
measure for $\a$ with full support. Suppose that there exists an Anosov element
in every Weyl chamber defined by $\mu$. Then for each Lyapunov exponent
$\chi $ the coarse Lyapunov distribution can be defined as
$$
  E^{\chi}(p) =  \bigcap _{\{a \in \A\, | \; \chi (a) <0\}} E^s _a(p) \;=
  \bigoplus_ {\{ \chi ' = c \, \chi\,|\;  c>0 \}}  E_{\chi '} (p)
 $$
on the set ${\cal P}$ of full measure where the Lyapunov exponents exist.
Moreover, $E^{\chi}$ is H\"{o}lder continuous, and thus it can
be extended to a H\"{o}lder distribution tangent to the foliation
$\w ^\chi = \bigcap  _{\{a  \in \A \,  \mid \, \chi (a) <0\}} \ws _a$
with uniformly $\Ci$ leaves.
\end{proposition}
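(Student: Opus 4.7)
\textbf{Step 1: finite intersection and identity on $\mathcal{P}$.} On the full-measure set $\mathcal{P}$, the stable distribution of an Anosov element $a$ equals $\bigoplus_{\chi_i(a)<0} E_{\chi_i}$, so $E^s_a$ depends only on the Weyl chamber of $a$. Hence
$$
\bigcap_{a\in\A,\,\chi(a)<0} E^s_a(p) \,=\, \bigcap_{C\subset\{\chi<0\}} E^s_{a_C}(p),
$$
a finite intersection over Weyl chambers $C\subset\{\chi<0\}$, where $a_C \in C\cap\A$ is provided by the Weyl-chamber hypothesis. A Lyapunov functional $\chi'$ contributes to this intersection iff $\chi'(a_C)<0$ for every such $C$; since these chambers fill $\{\chi<0\}$ up to a union of Lyapunov hyperplanes and $\chi'$ is linear, this is equivalent to $\chi'(v)\leq 0$ whenever $\chi(v)\leq 0$. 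A short convex-geometric argument identifies this condition with $\chi' = c\chi$ for some $c>0$, establishing the identity on $\mathcal{P}$.

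\textbf{Step 2: H\"older extension with constant dimension.} Each $E^s_{a_C}$ is H\"older continuous on all of $M$, so $F(p) := \bigcap_C E^s_{a_C}(p)$ is a Borel distribution of upper semicontinuous dimension equal to $d:=\sum_{c>0}\dim E_{c\chi}$ on $\mathcal{P}$. The essential remaining task is to show $\dim F(p)=d$ globally. I would apply the construction in parallel to every coarse Lyapunov ray $[\chi_i]$, obtaining distributions $F^{[\chi_i]}$ with $\dim F^{[\chi_i]}(p)\geq d_{[\chi_i]}$ everywhere. For any two distinct rays $[\chi_i]\neq[\chi_j]$, the convex-geometric argument of Step 1 supplies an Anosov element $a$ with $\chi_i(a)<0$ and $\chi_j(a)>0$, whence $F^{[\chi_i]}(p)\subset E^s_a(p)$ and $F^{[\chi_j]}(p)\subset E^u_a(p)$; in particular pairwise intersections are trivial at every point. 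Strengthening pairwise transversality to full directness of the sum $\bigoplus_i F^{[\chi_i]}(p)$ requires a careful use of finitely many Anosov elements separating suitable partitions of the rays, combined with the global count $\sum_i d_{[\chi_i]} = \dim M-\dim\o$ and the constancy of $\dim E^s_a + \dim E^u_a$. Once directness and thus $\dim F^{[\chi_i]}(p) = d_{[\chi_i]}$ are established everywhere, H\"older continuity of $F = E^\chi$ as a distribution of constant dimension follows from that of each $E^s_{a_C}$.

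\textbf{Step 3: foliation structure and main obstacle.} With $E^\chi$ a H\"older subbundle of constant dimension $d$, the leaves of $\w^\chi = \bigcap_C \ws_{a_C}$ are at each point transverse intersections of the uniformly $\Ci$ leaves $\ws_{a_C}(p)$, and hence themselves uniformly $\Ci$ submanifolds of dimension $d$ tangent to $E^\chi$. The main obstacle I anticipate is the constant-dimension argument in Step 2: upper semicontinuity of $\dim F$ together with the $\mathcal{P}$-identity only tell us that the bad set $\{\dim F > d\}$ is closed, $\a$-invariant, and $\mu$-null, not necessarily empty. Ruling it out is the technical heart of the proposition and requires the simultaneous analysis of every coarse ray and the stable/unstable symmetry afforded by the Anosov elements, as sketched above.
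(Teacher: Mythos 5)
The paper does not actually prove this proposition: it is quoted from \cite[Proposition 2.4]{KaSp} (with \cite[Proposition 2.2]{KaSa4} for the discrete-time case), so the relevant comparison is with the standard proof given there. Your Step 1 is exactly that argument: on $\mathcal P$ each $E^s_a$ equals the sum of the Lyapunov spaces negative on $a$, the intersection reduces to finitely many chamber representatives, and a functional $\chi'$ survives iff it is non-positive on the half-space $\{\chi\le 0\}$, i.e.\ positively proportional to $\chi$. Your Step 2 strategy is also the correct one, and the part you leave as an ``obstacle'' can be closed along the lines you sketch: the lower bound $\dim F^{[\chi_i]}(p)\ge d_{[\chi_i]}$ everywhere follows from upper semicontinuity of the dimension of an intersection of continuous bundles plus density of $\mathcal P$ (here full support of $\mu$ is used); directness of $\bigoplus_i F^{[\chi_i]}(p)\oplus T\o(p)$ at \emph{every} point follows by induction, since for any two distinct rays there is a Weyl chamber on which they have opposite signs, hence an Anosov element $a$ splitting the rays into two nonempty groups whose partial sums lie in $E^s_a(p)$ and $E^u_a(p)$ and therefore vanish separately; combined with $\sum_i d_{[\chi_i]}=\dim \M-\dim\o$ on $\mathcal P$ this forces $\dim F^{[\chi_i]}\equiv d_{[\chi_i]}$, and a constant-dimensional intersection of H\"older bundles is H\"older. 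So Step 2 is incomplete as written but not wrong in conception.

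The genuine gap is Step 3. The leaves $\ws_{a_C}(p)$ are \emph{not} transverse in the ambient manifold: every $T_p\ws_{a_C}$ misses the orbit direction and suitable unstable directions, so $\sum_C T_p\ws_{a_C}$ is a proper subspace and the ``transverse intersection of $\Ci$ leaves, hence a $\Ci$ submanifold'' reasoning does not apply. More to the point, knowing that the tangent distributions intersect in a H\"older bundle of constant dimension $d$ does not by itself imply that the set-theoretic intersection of the leaves is a $d$-dimensional $\Ci$ (or even topological) manifold tangent to it --- establishing this is precisely the nontrivial content of the cited proposition. The proofs in \cite{KaSp,KaSa4} obtain $\w^\chi(p)$ by working inside a single stable leaf $\ws_a(p)$ and identifying its intersection with $\ws_b(p)$ dynamically, via contraction-rate characterizations of the local manifolds for the commuting Anosov elements (as in \eqref{loc}) and graph-transform/H\"older-section arguments, which is also what yields the \emph{uniform} $\Ci$ regularity of the leaves. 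Some argument of this kind must replace your transversality claim before Step 3 stands.
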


Note that ergodic measures with full support always exist if a $\Zk$ action
contains a transitive Anosov element.

A natural example is given by the
measure $\mu$ of maximal entropy for such an element, which is unique
 \cite[Corollary 20.1.4]{HaKa} and hence is invariant under the whole action.

Since a coarse Lyapunov distribution is defined by a collection of
positively proportional Lyapunov exponents it can be uniquely identified
with the corresponding positive (negative) set of these functionals,
called the {\em positive (negative) Lyapunov half-space}, or with the
oriented Lyapunov hyperplane that separates them.

The action is called {\bf totally nonsymplectic}, or {\bf TNS}, if there are
no negatively proportional Lyapunov exponents. Equivalently, any two
different negative Lyapunov half-spaces have nontrivial intersection.
Therefore, any pair of coarse Lyapunov distributions for such an action
is contracted by the elements in this intersection.

\subsection{$\Zk$ and $\Rk$ actions on tori and nilmanifolds.}


\vskip.1cm

Let $f$ be an Anosov diffeomorphism of a torus or, more generally,
a nilmanifold $M=N/\G$. By the results of Franks and Manning in \cite{Fr,M},
$f$ is topologically conjugate to an Anosov automorphism $A : M \to M$,
i.e. there exists a homeomorphism  $\phi : M \to M$ such that $A \circ \phi = \phi \circ f$. The conjugacy $\phi$ is  bi-H\"older, i.e. both $\phi$ and $\phi^{-1}$
are H\"older continuous with some H\"older exponent $\gamma$.

Now we consider an Anosov $\Zk$ action $\a$ on a nilmanifold $M$.
Fix an Anosov element $a$ for $\a$. Then we have $\phi$ which
conjugates $\a(a)$ to an automorphism $A$. By \cite[Corollary 1]{W}
any homeomorphism of $M$ commuting with $A$ is an affine automorphism.
Hence we conclude that $\phi$ conjugates $\a$ to an action $\rho$
by affine automorphisms. We will call $\rho$ an {\em algebraic action}
and refer to it as the {\em linearization} of $\a$.

Now we describe the preferred invariant measure for $\a$
(cf.  \cite[Remark 1]{KaK07}).
We denote by $\lambda$ the normalized Haar measure on the
nilmanifold $M$. Note that $\lambda$ is invariant under any
affine automorphism of $M$ and is the unique measure of
maximal entropy for any affine Anosov automorphism.

\begin{proposition} \label{mu}
The action $\a$ preserves an absolutely continuous measure
$\mu$ with smooth positive density. Moreover,
$\mu = \phi^{-1} _\ast (\lambda)$ and for any Anosov element
$a\in \Zk$, $\mu$ is the unique measure of maximal entropy for $\a(a)$.
\end{proposition}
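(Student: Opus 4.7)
The plan is to construct $\mu$ as the measure of maximal entropy of a fixed Anosov element, identify it with $\phi^{-1}_\ast(\lambda)$ via entropy comparison, and finally upgrade it to absolute continuity with smooth density via higher rank arguments.

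First I would fix an Anosov element $a \in \Z^k$. By Bowen's theorem for transitive Anosov diffeomorphisms, $\alpha(a)$ admits a unique measure of maximal entropy, call it $\mu$. For any other $b \in \Z^k$ the map $\alpha(b)$ is a $C^{\infty}$ diffeomorphism commuting with $\alpha(a)$, so $\alpha(b)_\ast \mu$ is again $\alpha(a)$-invariant with entropy equal to $h_{\mathrm{top}}(\alpha(a))$; uniqueness forces $\alpha(b)_\ast \mu = \mu$, so $\mu$ is $\alpha$-invariant. Next, $\phi$ is a topological conjugacy and hence preserves topological entropy, while Haar $\lambda$ is the unique measure of maximal entropy for the affine Anosov automorphism $\rho(a)$ (Corollary 20.1.4 of \cite{HaKa}). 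Thus $\phi^{-1}_\ast \lambda$ is an $\alpha(a)$-invariant measure of entropy $h_\lambda(\rho(a)) = h_{\mathrm{top}}(\rho(a)) = h_{\mathrm{top}}(\alpha(a))$, and uniqueness again gives $\mu = \phi^{-1}_\ast \lambda$. Repeating this argument with any Anosov element in place of $a$ shows that $\mu$ is simultaneously the unique measure of maximal entropy for each $\alpha(a')$, establishing the last clause.

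The delicate step is absolute continuity with smooth positive density. From the previous paragraph one has $h_\mu(\alpha(a)) = h_{\mathrm{top}}(\rho(a)) = \sum_{\chi(a)>0}\chi(a)\dim E_\chi$, with $\chi$ ranging over the purely algebraic Lyapunov functionals of $\rho$, while the Ledrappier--Young formula expresses $h_\mu(\alpha(a))$ in terms of the metric Lyapunov exponents of $\mu$. Using the hypothesis that every Weyl chamber contains an Anosov element of $\alpha$, applied to a collection of elements whose positive half-spaces separate the coarse Lyapunov distributions, one forces the metric Lyapunov spectrum of $\mu$ to coincide with the algebraic one and obtains equality in the Ruelle inequality. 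Ledrappier's characterization then implies that $\mu$ is the SRB measure of $\alpha(a)$; applying the same argument to $a^{-1}$ yields absolute continuity of conditionals along the stable foliation as well. Pesin's absolute continuity theorem then gives $\mu \ll \mathrm{Leb}$, and a Liv\v{s}ic--type smoothing along the smooth stable and unstable foliations promotes the density to a smooth positive function. Alternatively one may invoke \cite[Remark 1]{KaK07} directly, which asserts exactly this smoothness for higher rank Anosov actions on nilmanifolds with enough Anosov elements.

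The main obstacle is the coincidence-of-exponents argument above: since $\phi$ is only bi-H\"older, smoothness of $\lambda$ cannot be transported through $\phi$ by regularity alone, and the higher-rank hypothesis (an Anosov element in each Weyl chamber of $\rho$) is essential precisely at this step to bridge the gap between MME and SRB.
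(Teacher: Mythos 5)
Your first two paragraphs are fine: defining $\mu$ as the measure of maximal entropy of one Anosov element, using uniqueness to get invariance under the commuting action, and using the conjugacy to identify it with $\phi^{-1}_\ast(\lambda)$ and to handle every Anosov element is correct, and it matches (in reverse order) the easy part of the paper's argument. The problem is the third paragraph, which is where all the content of the proposition lives. The assertion that the higher-rank hypothesis ``forces the metric Lyapunov spectrum of $\mu$ to coincide with the algebraic one and obtains equality in the Ruelle inequality'' is not an argument, and it cannot be extracted from entropy bookkeeping: $h_\mu(\a(a))=h_{\mathrm{top}}(\a(a))$ only gives the \emph{lower} bound $\sum_{\chi^\mu(a)>0}\chi^\mu(a)\dim E^\mu_\chi\ge h_\lambda(\rho(a))$ via Ruelle's inequality, and there is no upper bound on the unstable expansion of $\a$ with respect to $\mu$ coming from entropy, so nothing forces the MME to be SRB. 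Indeed the paper is careful to point out (Remark after Proposition \ref{properties}) that at this stage the exponents of $(\ta,\tm)$ and $(\tr,\tl)$ are only known to be \emph{positively proportional}, and that their equality is available only a posteriori, once smooth conjugacy is proved; your proposal assumes exactly this unproven equality. The fallback ``invoke \cite[Remark 1]{KaK07} directly'' is also not a proof: that reference is cited in the paper only as motivation for the choice of preferred measure, not as a statement subsuming this proposition.

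For comparison, the paper gets absolute continuity without ever discussing exponents of $\mu$: it transports the Jacobian cocycle $\log J_b$ of $\a$ (with respect to Haar $\lambda$) through the bi-H\"older conjugacy to a H\"older cocycle over the algebraic action $\rho$, applies Katok--Spatzier H\"older cocycle rigidity \cite{KS94} to write $\log J_b = c(b)+\Psi\circ\a(b)-\Psi$ with $\Psi$ H\"older, and observes that $e^{-\Psi}\lambda$ (normalized) is then $\a$-invariant because the constant $e^{c(b)}$ must be $1$. Smoothness of the density is then automatic from the single-element theory (\cite[Theorem 19.2.7]{HaKa}, \cite[Corollary 2.1]{LMM}), and the identification with the MME comes from $\log J^u$ being cohomologous to a constant, so that the SRB/absolutely continuous measure is an equilibrium state of a constant function. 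So the genuine higher-rank input is cocycle rigidity applied to the Jacobian, which is precisely the ingredient missing from your bridge between MME and SRB; as written, your proof of absolute continuity does not go through.
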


\begin{proof}
Let $J$ denote  the Jacobian of $\alpha$ with respect to the
Haar measure $\lambda$, more precisely, for any $b \in \Z^k$ the
function $J_b (x)$  denotes the density of the push forward measure
$\a(b)_\ast (\lambda )$ with respect to $\lambda$.
Since the conjugacy $\phi$ is bi-H\"{o}lder, $\log \,(J \circ \phi^{-1})$
is a H\"{o}lder cocycle over the linearization $\rho$.
By rigidity of H\"{o}lder cocycles
for irreducible algebraic $\Z ^k$-actions \cite{KS94},
this cocycle is H\"{o}lder cohomologous to a constant one, i.e.
there exists a linear functional $c :\Z^k \to \R$ and a H\"{o}lder
continuous function
$\Phi$ on $M$ such that for all $b \in \Z^k$ and $x \in M$
$$
\log \,(J_b \circ \phi^{-1}(x))  = c(b) + \Phi (\rho (b)x) - \Phi (x) .
$$
a constant $c$. Hence for the function $\Psi =\Phi \circ \phi$ we have
$$
\log \,(J_b (x) ) = c(b) + \Psi (\a (b)x) - \Psi (x) .
$$
Let $\mu$ be the measure $e^{-\Psi(x)} \lambda$ normalized by
$\mu (M)=1$. It follows that the Jacobian $\tilde J$ of $\alpha$ with
respect to $\mu$ is constant,  $\tilde J_b (x) = e^{c(b)}$. Since
$\a(b)$ is a diffeomorphisms we have $\int_M \tilde J_b (x) d\mu =1$
and hence $e^{c(b)}=1$. Thus $\a$ preserves $\mu$.

Let $a$ be an Anosov element of $\a$. Then the density of its
absolutely continuous measure $\mu$ is $C^1$ \cite[Theorem 19.2.7]{HaKa},
and in fact $C^\infty$  \cite[Corollary 2.1]{LMM}. Also, $\mu$
is the equilibrium state of $\log \,(J^u)$ \cite[Theorem 20.4.1]{HaKa},
where $J^u$ is the Jacobian of $\a(a)$ along its unstable distribution.
As before, we have that $\log \,(J^u)$ is cohomologous to a constant
function. Since the measure of maximal entropy of the transitive Anosov
diffeomorphism $\a(a)$ is the equilibrium state of the constant function
\cite[Theorem 20.1.3]{HaKa}, and since the equilibrium state is unique
\cite[Theorem 20.3.7]{HaKa} we conclude that $\mu$ is the measure
of maximal entropy for $\a(a)$. Since $\lambda$ is the measure
of maximal entropy for $\rho(a)$ then by conjugation so is the measure
$\phi^{-1} _\ast (\lambda)$ for $\a(a)$. Hence by uniqueness we have
$\mu = \phi^{-1} _\ast (\lambda)$.
\QED
\end{proof}

We will show below that the Lyapunov exponents of $(\a, \mu)$ and
$(\rho, \lambda)$ are positively proportional and that the corresponding
coarse Lyapunov foliations are
mapped into each other by the conjugacy $\phi$.

We consider the suspensions $\ta$ and $\tr$ of $\a$ and $\rho$.
These are smooth
$\Rk$ actions on the  suspension manifolds $\S$ and $\r$ of $\a$ and
$\rho$. We denote the lifts to the suspensions of the conjugacy and the invariant
measures by $\tp$, $\tm$, and $\tl$. Note that $\tp$ and $\tp ^{-1}$ are also H\"older continuous with  the same exponent $\gamma >0$ as $\phi$ and $\phi ^{-1}$.

 From now on, instead of indexing a coarse Lyapunov by a representative of the class of positively proportional
Lyapunov functionals, we index them numerically.  I.e. we write $\w^i$ instead of $\w^{\chi}$, implicitly identifying
the finite collection of equivalence classes of Lyapunov exponents with a finite set of integers.
The next proposition summarizes important properties of the suspension actions.
Similar properties hold for the original $\Zk$ actions.


\begin{proposition} \label{properties}
 Assume there is an Anosov element in every  Weyl chamber. Then

\begin{enumerate}

\item The Lyapunov exponents of $(\ta, \tm)$ and $(\tr, \tl)$ are positively
proportional, and thus the Lyapunov hyperplanes and Weyl chambers are
the same.

\item For any coarse Lyapunov foliation $\w^i_\ta$ of $\ta$
$$
\tp (\w^i_\ta ) = \w^i_\tr ,
$$
where $\w^i_\ta$ is the corresponding coarse Lyapunov foliation for $\tr$.

\end{enumerate}

\end{proposition}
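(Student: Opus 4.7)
The strategy is to use the bi-H\"{o}lder conjugacy $\tp$ together with a H\"{o}lder comparison of contraction rates to transfer the algebraic Lyapunov data from $\tr$ to $\ta$. I first observe that $\tp$ preserves stable foliations of Anosov elements. For any $a\in\Rk$ Anosov for $\ta$, the diffeomorphism $\ta(a)$ is expansive, and expansiveness is preserved under topological conjugacy, so $\tr(a)=\tp\circ\ta(a)\circ\tp^{-1}$ is expansive. Since an affine automorphism of a nilmanifold is expansive if and only if Anosov, $\tr(a)$ is itself Anosov. Topological conjugacies send topologically defined stable sets to one another, and for Anosov diffeomorphisms these coincide with the stable manifolds, so $\tp(\w^s_{\ta(a)})=\w^s_{\tr(a)}$.

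Next, fix a coarse Lyapunov foliation $\w^i_\tr$ for $\tr$ and let $\w:=\tp^{-1}(\w^i_\tr)$ be the corresponding $\ta$-invariant H\"{o}lder foliation on $\S$. For any $a\in\Rk$ with $\chi^\tr_i(a)<0$, the algebraic action $\tr(a)$ contracts distances on leaves of $\w^i_\tr$ at exponential rate at most $\chi^\tr_{\max}(a)<0$, where $\chi^\tr_{\max}$ denotes the largest Lyapunov functional of $\tr$ positively proportional to $\chi^\tr_i$. Composing with the H\"{o}lder estimate $d(\tp^{-1}(u),\tp^{-1}(v))\le C\,d(u,v)^{\gamma}$ and the intertwining identity $\ta(a)^n=\tp^{-1}\circ\tr(a)^n\circ\tp$ yields
$$
d\bigl(\ta(a)^n x,\,\ta(a)^n y\bigr)\;\le\;C'\,e^{\gamma n\chi^\tr_{\max}(a)}
$$
for all $n\ge 0$ and $x,y$ in a common $\w$-leaf. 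The smooth-leaf geometry of $\w^i_\tr$ combined with the Oseledec theorem applied to the $\ta$-invariant measurable sub-distribution $T\w\subset T\S$ then forces every $\ta$-Lyapunov functional $\chi^\ta_j$ with $E^\ta_j\subset T\w$ to satisfy $\chi^\ta_j(a)\le\gamma\chi^\tr_{\max}(a)$ on $\{\chi^\tr_i<0\}$, and in particular $\chi^\ta_j<0$ on this entire open half-space. A linear functional non-positive on an open half-space must vanish on the bounding hyperplane, so $\ker\chi^\ta_j=\ker\chi^\tr_i$ and $\chi^\ta_j=c_j\chi^\tr_i$ for some $c_j>0$.

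Running this construction over every $i$ exhausts both sides: the $T\w^i_\tr$ together with the orbit direction span $T\r$, and each $\w=\tp^{-1}(\w^i_\tr)$ is nontrivial with $T\w$ a nonzero sum of $\ta$-Oseledec spaces, so the matching $i \leftrightarrow \{j:E^\ta_j\subset T\w\}$ pairs the two sets of Lyapunov functionals via positive proportionality, establishing (1) and the identification of Weyl chambers. Weyl chamber equality places $\ta$ in the hypotheses of Proposition \ref{CoarseLyapunov}, which furnishes H\"{o}lder coarse Lyapunov foliations $\w^i_\ta$ with uniformly $\Ci$ leaves and tangent distribution $\bigoplus_{j:\,\chi^\ta_j\sim\chi^\tr_i}E^\ta_j$. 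By the preceding Oseledec analysis this coincides with the tangent distribution of $\w$ at $\tm$-almost every point, and since both $\w$ and $\w^i_\ta$ are $\ta$-invariant H\"{o}lder foliations of the same leaf dimension they must agree, giving $\tp(\w^i_\ta)=\w^i_\tr$, which is (2). The principal technical hurdle is the Oseledec step converting the H\"{o}lder distance estimate on $\w$-leaves into the required sharp bound on Lyapunov functionals along $T\w$; once that is in hand, the remainder reduces to linear algebra on $\Rk$.
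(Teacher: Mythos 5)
There is a genuine gap at the central step of your argument, the one you yourself flag as ``the Oseledec step.'' You set $\w:=\tp^{-1}(\w^i_\tr)$ and then speak of ``the $\ta$-invariant measurable sub-distribution $T\w\subset T\S$'' and of Lyapunov functionals $\chi^\ta_j$ with $E^\ta_j\subset T\w$. But $\tp^{-1}$ is only H\"older, so the leaves of $\w$ are merely topological submanifolds: they have no tangent spaces, $T\w$ is not a defined object, and there is no a priori reason that any Oseledec subspace of $\ta$ should be ``contained in'' such a leaf, nor that the leaves are saturated by Oseledec data. Converting your (correct) leafwise distance estimate $\d(\ta(na)x,\ta(na)y)\le C'e^{\gamma n\bar\chi(a)}$ into a bound on Lyapunov exponents of vectors requires the leaves to be uniformly $C^1$ and tangent to a measurable invariant distribution, i.e.\ it requires regularity of $\tp$ along the coarse Lyapunov leaves --- which is essentially what the whole paper is trying to prove. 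The same unproved identification is used again in your exhaustion step (``each $\w$ is nontrivial with $T\w$ a nonzero sum of $\ta$-Oseledec spaces'') and in your deduction of (2), where ``two invariant H\"older foliations of the same leaf dimension with a.e.\ equal tangent data must agree'' is asserted without argument.

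The information does flow through the H\"older conjugacy, but only in the other direction, and this is how the paper proceeds. On the nonlinear side Pesin theory gives genuine smooth (strong) stable manifolds tangent to sums of Oseledec spaces, characterized a.e.\ by exponential contraction as in \eqref{loc}; since $\tp$ is bi-H\"older it preserves exponential decay of distances, so $\tp(W^-_{\ta(a)}(x))=W^-_{\tr(a)}(\tp x)$ for every $a\in\rk$ and a.e.\ $x$ --- no differentiation of $\tp$ is ever needed. Statement (1) is then obtained by a reflection trick: if an oriented Lyapunov hyperplane $L$ of one action were not one for the other, choose $a$ close to $L$ and its reflection $b$ across $L$, not separated by any other hyperplane of either action; then $W^-_{\ta(a)}\subsetneq W^-_{\ta(b)}$ while $W^-_{\tr(a)}\supseteq W^-_{\tr(b)}$, contradicting injectivity of $\tp$. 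Statement (2) follows immediately from $\tp(W^s_{\ta(a)})=W^s_{\tr(a)}$ for Anosov elements together with the intersection formula of Proposition \ref{CoarseLyapunov} on both sides. (Your opening reduction via expansiveness is also shaky in the suspension setting, and in (1) the elements $a,b$ need not be Anosov, which is why the paper works with Pesin stable manifolds of arbitrary elements; but these are secondary to the main gap above.) To repair your approach you would have to replace the exponent-transfer onto $T\w$ by set-theoretic inclusions of the pulled-back leaves into Pesin stable manifolds and then still run a comparison of half-spaces, at which point you have essentially reconstructed the paper's argument.
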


{\bf Remark.} We do not claim  at this point that the Lyapunov exponents
of $(\ta, \tm)$ and $(\tr, \tl)$ (or of different invariant measures for $\ta$)
are equal. Of course if $\ta$ is shown to be smoothly conjugate to
$\tr$ then this is true a posteriori.

{\bf Remark.} In fact, one can show that the same holds for Lyapunov exponents and
coarse Lyapunov foliations of $(\a, \nu)$ for any $\a$-invariant measure $\nu$ so,
in particular, the Lyapunov exponents of all $\a$-invariant measures
are positively proportional and the coarse Lyapunov splittings are consistent with
the continuous one defined in Proposition \ref{CoarseLyapunov}.

\begin{proof}
First we observe that the  conjugacy $\tp$ maps the stable
manifolds of $\ta$ to those of $\tr$. More precisely, for any $a \in \Rk$
and any for $\mu$-a.e. $x \in \S$ we have
\begin{equation} \label{phimapsW}
\tp (W^-_{\ta (a)}(x)) = W^-_{\tr (a)}(\tp(x)).
 \end{equation}
Indeed, it suffices to establish this for local manifolds, which are
characterized by the exponential contraction as in \eqref{loc}.
Since $\tp$ is bi-H\"older, it preserves the  property that $\d (x_n, y_n)$
decays exponentially, which implies \eqref{phimapsW}.
In particular, for any Anosov $a \in \Rk$ and any $x \in \S$
we have $\tp (W^s_{\ta (a)}(x)) = W^s_{\tr (a)}(\tp(x))$.
Hence the formula for $\w^i_\ta$ given in Proposition
\ref{CoarseLyapunov} implies (2) once we establish (1).

To establish (1)  it suffices to show that the oriented Lyapunov hyperplanes
of $(\ta, \tm)$ and $(\tr, \tl)$ are the same. Suppose that an oriented Lyapunov
hyperplane $L$ of one action, say $\ta$, is not an oriented Lyapunov hyperplane
of the other action $\tr$.  We take an element $a$ close to $L$ in the corresponding positive Lyapunov half-space $L^+$ and denote the reflection
of $a$ across $L$ by $b$. We can choose them so that $a$ and $b$ are
not separated by any other Lyapunov hyperplane of either action.
Then, $E^-_{\ta (b)} = E^-_{\ta (a)} \oplus E$, where $E$ is the
coarse Lyapunov distribution of $\ta$ corresponding to $L$.
Similarly, since we assumed that $L^+$ is  not a positive Lyapunov
 half-space for $\tr$, we have  $E^-_{\tr (b)} \subseteq E^-_{\tr (a)}$.
 We conclude that

$$
W^-_{\ta (a)}  \subsetneq W^-_{\ta (b)} \quad \text{ but } \quad
W^-_{\tr (a)}  \supseteq W^-_{\tr (b)},
$$
which contradicts \eqref{phimapsW} since $\tp$ is a homeomorphism.
 \QED
\end{proof}


\section{Outline of the proof of Theorem \ref{nilmanifold}} \label{outline}

Proposition \ref{CoarseLyapunov} shows that coarse Lyapunov foliations for
$\a$ and $\ta$ are well-defined continuous foliations with smooth leaves.
By Proposition \ref{properties} they are mapped by the conjugacy
to the corresponding homogeneous foliations for $\rho$ and $\tr$. The main goal
is to study the regularity of the conjugacy $\phi$ along these foliations.

For the most of the proof we consider a coarse Lyapunov foliation $\w$ of
the suspension action $\ta$. The first major step is to establish
smoothness of certain holonomies between leaves of $\w$. The TNS assumption
gives the existence of invariant foliations $\w_1$ and $\w_2$ such that
$T\w_1 \oplus T\w \oplus T\w_2 \oplus T\o= TM$.  Moreover, each $T\w_i \oplus T\w$ is
the stable distribution of some element and, in particular, is integrable.
In Section \ref{Smooth holonomies} we show that the holonomies along
$\w_1$ (and along $\w_2$) between leaves of $\w$ are $\ci$. This follows
from the existence of an element for which $\w_1$ a fast stable foliation inside
$T\w_1 \oplus T\w$. To obtain such an element we establish in
Section \ref{uniform estimates} that the expansion or contraction of $\w$ by an
element in the corresponding Lyapunov hyperplane is uniformly slow.

The second major step is to establish smoothness of the conjugacy $\phi$ along
the leaves of  the coarse Lyapunov foliation $\w$. For this we introduce in
Section \ref{section:normalforms} the measurable normal forms for the action
on $\w$ defined almost everywhere with respect to the measure $\mu = \phi^{-1}_\ast (\la)$.
In Section \ref{commuting holonomies} we show that the smooth holonomies
along $\w_1$ preserve the normal forms on $\w$. For this we use the
semisimplicity assumption to split the homogeneous foliation $\tp (\w_1)$ into
subfoliations corresponding to eigenspaces of $\rho$. Then we see that holonomies
along a particular subfoliation preserve the normal forms since they commute with
an element in $\rk$ which fixes the corresponding eigenspace and contracts $\w$.
Since $\w_1$ is the full stable foliation of some element, it is ergodic with respect
to $\mu$, and hence the holonomies along a typical leaf are sufficiently transitive.
Using this we show in Section \ref{limit argument} that for a typical leaf $W$ of $\w$ and for almost every translation $T$ of the homogeneous leaf $\phi (W)$,
the conjugate map $\phi^{-1} \circ T \circ \phi : W \to W$ can be obtained
as a certain limit of such holonomies.  Then this map also preserves the
normal forms and therefore is smooth. This yields that $\phi$ is $\ci$ along $W$.

Since the holonomies between different leaves of $\w$ along $\w_1$ and $\w_2$
 are smooth and intertwine the restriction of $\phi$ to these leaves
we obtain that $\phi$ is $\ci$ along all leaves of $\w$ and that the derivatives are
continuous transversally. Then standard elliptic theory implies that $\phi$ is
$\ci$ on $M$.

\section{Uniform estimates for elements near Lyapunov hyperplane}
\label{uniform estimates}

We consider the suspension actions $\ta$ and $\tr$ of $\rk$ on $\S$  and $\r$.
We fix a Lyapunov hyperplane $L \subset \rk$ and the corresponding
positive Lyapunov half-space $L^+$. We denote the corresponding
coarse Lyapunov distributions for $\ta$ and $\tr$ by $E$ and $\be$
respectively. Recall that $\gamma >0$ denotes a H\"older exponent
of $\tp$ and $\tp ^{-1}$.

\begin{lemma} \label{upper est}
Consider an element $b \in \rk$. Let $\bar \chi (b)$ be the largest Lyapunov exponent
of $\tr (b)$ corresponding to $\be$ and denote $\chi_M =\max \{ 0, \bar \chi (b)/ \gamma \}$.
Let $\nu$ be {any} ergodic invariant measure for $\ta (b)$ and let $\chi_\nu (b)$ be the
largest Lyapunov exponent of $(\ta (b), \nu)$ corresponding to the distribution $E$. Then
$\chi_\nu (b)\le \chi_M$
\end{lemma}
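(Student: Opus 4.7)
If $\chi_\nu(b) \le 0$ then $\chi_\nu(b) \le 0 \le \chi_M$ and there is nothing to prove. Assume from now on that $\chi_\nu := \chi_\nu(b) > 0$; I will show $\chi_\nu \le \bar\chi(b)/\gamma$. The strategy is to contradict $\chi_\nu > \bar\chi(b)/\gamma$ by tracking exponential expansion on both sides of $\tilde\phi$: on the nonlinear side, the top Oseledec exponent on $E$ realizes the rate $\chi_\nu$ via Pesin theory, and on the algebraic side, the coset structure of the $\bar{\mathcal W}$-leaves together with the affine action of $\tilde\rho$ caps growth at rate $\bar\chi(b)$; bi-H\"older-ness of $\tilde\phi$ bridges the two.

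For $\nu$-a.e.\ $x$, let $E_1(x) \subset E(x)$ be the top Lyapunov subspace of $\tilde\alpha(b)$ restricted to $E$, with exponent $\chi_\nu$. By Pesin theory applied to the invariant subbundle $E$, there is a local smooth disk $W^1_{\mathrm{loc}}(x) \subset \mathcal{W}(x)$ tangent to $E_1(x)$ such that, for every $\epsilon > 0$ and every $y \in W^1_{\mathrm{loc}}(x)$ with $d(x,y) = \eta$ sufficiently small,
$$
c_\epsilon\,\eta\, e^{(\chi_\nu - \epsilon) T} \;\le\; d\bigl(\tilde\alpha(Tb) x,\, \tilde\alpha(Tb) y\bigr) \;\le\; C_\epsilon\,\eta\, e^{(\chi_\nu + \epsilon) T}
$$
for all $T \ge 0$ for which the right-hand side stays below some fixed Pesin chart radius $r > 0$. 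Fix $\epsilon_0 \in (0,r)$ and, for each small $\eta$, choose $T = T(\eta)$ so that the middle distance equals $\epsilon_0$; then $T \sim \log(\epsilon_0/\eta)/\chi_\nu \to \infty$ as $\eta \to 0$. Using $\tilde\phi \circ \tilde\alpha(Tb) = \tilde\rho(Tb) \circ \tilde\phi$, the lower H\"older inequality coming from $\tilde\phi^{-1}$ yields
$$
d\bigl(\tilde\rho(Tb)\tilde\phi(x),\, \tilde\rho(Tb)\tilde\phi(y)\bigr) \;\ge\; c\,\epsilon_0^{\,1/\gamma}.
$$
Simultaneously, by Proposition \ref{properties}(2) both $\tilde\phi(x)$ and $\tilde\phi(y)$ lie on a single leaf of $\bar{\mathcal W}$, a Euclidean coset on which $\tilde\rho(Tb)$ restricts to an affine map whose linear-part operator norm is bounded by $C'_\epsilon\, e^{(\bar\chi(b) + \epsilon) T}$. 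Combined with the upper H\"older estimate $d(\tilde\phi(x), \tilde\phi(y)) \le C\,\eta^\gamma$, this gives
$$
d\bigl(\tilde\rho(Tb)\tilde\phi(x),\, \tilde\rho(Tb)\tilde\phi(y)\bigr) \;\le\; C''_\epsilon\, e^{(\bar\chi(b) + \epsilon) T}\,\eta^\gamma.
$$

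Putting the two displayed bounds together, and substituting $e^{(\bar\chi(b) + \epsilon) T} \asymp (\epsilon_0/\eta)^{(\bar\chi(b) + \epsilon)/\chi_\nu}$, we arrive at
$$
c\,\epsilon_0^{\,1/\gamma} \;\le\; C''_\epsilon\,\epsilon_0^{(\bar\chi(b) + \epsilon)/\chi_\nu}\,\eta^{\,\gamma\,-\,(\bar\chi(b) + \epsilon)/\chi_\nu}.
$$
The left side is a fixed positive constant, so letting $\eta \to 0$ forces the exponent of $\eta$ on the right to be nonpositive: $\gamma \chi_\nu \le \bar\chi(b) + \epsilon$. Sending $\epsilon \to 0$ gives $\chi_\nu \le \bar\chi(b)/\gamma$, and since $\chi_\nu > 0$ this forces $\bar\chi(b) > 0$ and hence $\bar\chi(b)/\gamma = \chi_M$, as required. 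The main technical subtlety concerns the $\tilde\rho$-side upper bound, where the leaf-dynamics estimate is most naturally stated in the intrinsic leaf metric while the H\"older estimates use the ambient metric on $\mathcal{R}$; because only an \emph{upper} bound is needed there and $d_{\mathrm{ambient}} \le d_{\mathrm{intrinsic}}$, no loss is incurred, and any potential ``wrapping'' of the expanding leaf in $\mathcal{R}$ is sidestepped by importing the \emph{lower} bound from the $\tilde\alpha$-side via the H\"older inequality.
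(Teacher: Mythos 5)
Your argument is correct and is essentially the paper's own proof run in forward time: the paper takes $y$ in the intersection of the strong unstable manifold of $\ta(b)$ (for exponents $>\chi_M+\e$) with the coarse Lyapunov leaf $\w(x)$ and reaches a contradiction by pushing the backward exponential convergence of $x_n,y_n$ through the bi-H\"older conjugacy $\tp$ against the fact that $\tr(-b)$ contracts the corresponding leaf of $\bw$ at rate at most $\bar\chi(b)\le\gamma\,\chi_M$. Your leafwise Pesin disk realizing the top exponent, the stopping time at scale $\epsilon_0$, and the exponent bookkeeping are a quantified, time-reversed packaging of the same comparison (with the same standard glosses on Pesin constants and on intrinsic versus ambient leaf metrics that the paper itself makes), so no genuinely new route or gap is involved.
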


\begin{proof} Suppose that $\chi_\nu (b) > \chi_M$.
Let $E^{uu}$ be the distribution spanned by the Lyapunov subspaces of $(\ta (b), \nu)$ corresponding to Lyapunov exponents greater than $\chi_M+\e$. Then, for some $\e >0$,
$E^{uu}$ has nonzero intersection with the distribution $E$.
The strong unstable distribution $E^{uu}(x)$
is tangent for $\nu$-a.e. $x$ to the corresponding strong unstable manifold $W^{uu}(x)$.
Hence the intersection $F(x)$ of $W^{uu}(x)$ with the leaf $W(x)$ of the coarse Lyapunov
foliation corresponding to $E$ is a submanifold of positive dimension.
Take $y \in F(x)$ and denote $y_n=\ta(-nb)(y)$ and $x_n=\ta(-nb)(x)$.
Then $x_n$ and $y_n$ converge exponentially with the rate at least $\chi_M+\e$.
Since the conjugacy $\tp$ is $\gamma$ bi-H\"older it is easy to see that
$$\d (\tp (x_n),\tp (y_n)) = \d (\tr (-nb)(x),\tr (-nb)(y))$$
decreases at a rate faster than $\gamma \, \chi_M$. But this is impossible since $\tp$
maps $W(x)$ to the corresponding foliation of the linearization which
is contracted by $\tr (-b)$ at a rate at most $\gamma \, \chi_M$. \QED
\end{proof}

\begin{proposition} \label{e-slow}
Let $L \subset \rk$ be a Lyapunov hyperplane and $E$ be the corresponding
coarse Lyapunov distribution for $\ta$. For any $\e>0$ there exist $\eta>0$
so that for any element $b \in \rk$  with $\d \, (b, L) \le \eta \, \e \,$ there
exists $C=C(b,\e)$ such that
\begin{equation}  \label {e-est}
(C e^{\e  n})^{-1} \| v \| \le  \| D (\ta (nb)) v \| \le  C e^{\e n} \| v \|
\quad \text{for all } v \in E, n\in \mathbb N.
\end{equation}

\end{proposition}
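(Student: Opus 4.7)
The plan is to derive \eqref{e-est} by applying Lemma \ref{upper est} to both $b$ and $-b$, and then invoking the variational principle for subadditive cocycles to pass from per-measure bounds on Lyapunov exponents to a uniform pointwise bound on the cocycle norm.

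First I would quantify the dependence of $\chi_M$ on $\d(b,L)$. All Lyapunov functionals of $\tr$ associated with $\be$ are positive multiples of a single linear functional $\chi$ vanishing on $L$, so $|\chi(b)|\le\|\chi\|\,\d(b,L)$. Hence there is a constant $C'=C'(\chi,\gamma)$, depending only on the action and not on $b$ or $\e$, such that both $\chi_M(b)$ and $\chi_M(-b)$ (defined by applying Lemma \ref{upper est} to $\pm b$) are at most $C'\,\d(b,L)$. Set $\eta:=1/(2C')$; then $\d(b,L)\le\eta\e$ forces $\chi_M(\pm b)\le\e/2$.

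Next, regard $T:=\ta(b)$ as a homeomorphism of the compact manifold $\S$ and set $\Phi_n(x):=\log\|D\ta(nb)|_{E(x)}\|$. Since $E$ is continuous and $\ta$-invariant, $\Phi_n$ is a continuous subadditive cocycle over $T$:  $\Phi_{n+m}(x)\le\Phi_n(T^m x)+\Phi_m(x)$. By Fekete's lemma,
$$\Lambda^+(b)\;:=\;\lim_n \tfrac{1}{n}\sup_{x\in\S}\Phi_n(x)\;=\;\inf_n \tfrac{1}{n}\sup_{x\in\S}\Phi_n(x)$$
exists. By the variational principle for continuous subadditive cocycles over a homeomorphism of a compact metric space (in the spirit of Sturman--Stark, Schreiber, Morris), $\Lambda^+(b)$ equals the supremum, over $T$-invariant ergodic Borel probability measures $\nu$ on $\S$, of the top Lyapunov exponent of $D\ta(nb)|_E$ with respect to $\nu$. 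Lemma \ref{upper est} bounds each such exponent by $\chi_M(b)\le\e/2$, hence $\Lambda^+(b)\le\e/2$, and consequently there exists $C_1=C_1(b,\e)$ with $\sup_x\|D\ta(nb)|_{E(x)}\|\le C_1 e^{\e n}$ for all $n\ge 0$, yielding the upper half of \eqref{e-est}.

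Running the identical argument with $-b$ in place of $b$ gives $\sup_x\|D\ta(-nb)|_{E(x)}\|\le C_2 e^{\e n}$. Since $D\ta(-nb)|_{E(\ta(nb)x)}$ inverts $D\ta(nb)|_{E(x)}$, this translates into the lower bound $\|D\ta(nb)v\|\ge(C_2 e^{\e n})^{-1}\|v\|$ for $v\in E$, and taking $C:=\max(C_1,C_2)$ proves \eqref{e-est}. The main obstacle is precisely the uniform upgrade: Lemma \ref{upper est} furnishes only measure-by-measure bounds, and converting them into a supremum bound in $x$ rests on the subadditive variational principle. If one prefers an elementary substitute, the same conclusion is obtained by contradiction: a failure of the sup-bound produces a sequence of orbits exhibiting supra-$\chi_M$ growth, and passing their empirical measures to a weak-$*$ limit on $\S$, together with Kingman's subadditive ergodic theorem, yields an invariant ergodic $\nu$ with top Lyapunov exponent on $E$ exceeding $\chi_M$, contradicting Lemma \ref{upper est}.
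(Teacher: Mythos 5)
Your proposal is correct and follows essentially the same route as the paper: apply Lemma \ref{upper est} to $\pm b$ after noting $\bar\chi(\pm b)=O(\d(b,L))$, view $\log\|D\ta(nb)|_{E(x)}\|$ as a continuous subadditive cocycle, and upgrade the measure-by-measure Lyapunov exponent bound to a uniform one via the subadditive variational principle / uniform growth estimate (the paper cites exactly Schreiber and Rodriguez Hertz for this step), then deduce the lower bound from the upper bound for $-b$. No substantive difference in method or gaps to report.
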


\begin{proof}
In the proof we will abbreviate $\ta (b)$ to $b$.
Consider functions $a_n (x) = \log \| D b^n |_E(x) \|$, $n \in \mathbb N$.
Since the distribution $E$ is continuous, so are the functions $a_n$.
The sequence $a_n$ is subadditive, i.e. $a_{n+k} (x) \le a_n (b^k(x)) + a_k (x)$.
The Subadditive and Multiplicative Ergodic Theorems imply that for every
$b$-invariant ergodic measure $\nu$ the limit $\lim _{n\to \infty} \, {a_n(x)/n} $
exists for $\nu$-a.e.$\,x$ and equals the largest Lyapunov exponent of $(b, \nu)$
on the distribution $E$.

The largest exponent $\bar \chi (b)$ of $\tr (b)$ from Lemma \ref{upper est}
can be estimated from above by $c \cdot dist(b,L)$ for some $c>0$. Hence
we can find $\eta>0$ so that the number $\chi_M$ from Lemma \ref{upper est}
is less than $\e/2$ for all $b \in \rk$  with $\d \, (b, L) \le \eta \, \e$. Then Lemma
\ref{upper est} implies that $\lim _{n\to \infty} \, {a_n(x)/n} \le \e/2$ for almost every
$x$ with respect to any $b$-invariant ergodic measure $\nu$.
Thus the exponential growth rate of $\| D b^n |_E(x) \|$ is less than $\e/2$ for
all $b$-invariant ergodic measures. Since $\| D b^n |_E(x) \|$ is continuous,
this implies the uniform exponential growth estimate, as in the second
inequality in \eqref{e-est} (see \cite[Theorem 1]{Sch} or \cite[Proposition 3.4]{RH}).
The first inequality  in \eqref{e-est} can be obtained from the second one for $-b$.
 \QED
\end{proof}

\section{Smooth holonomies.} \label{Smooth holonomies}

We consider the suspension actions $\ta$ and $\tr$ of $\rk$ on $\S$.
We fix a Lyapunov hyperplane $L \subset \rk$ and denote by $E$ and $\w$
the corresponding coarse Lyapunov distribution and foliation for $\ta$ on $\S$.
These are unique by the TNS hypothesis as there are no negatively proportional Lyapunov exponents.
In this section we establish smoothness of certain holonomies between leaves of $\w$.

We first need a technical result on existence of suitable complementary foliations.
\begin{Lemma} \label{splittings}
For a TNS action $\alpha$, suppose that every Weyl chamber contains an Anosov element.  Then there are  $\ta$-invariant distributions $E_1$
and $E_2$ such that $E_1 \oplus E \oplus E_2 \oplus T\o = T\S$.
Moreover, both $E_i $ and $E_i \oplus E$, $i=1,2$ are the stable distribution
of some Anosov elements, and hence  are tangent to invariant foliations
which we denote respectively by $\w_i $ and $\w_i \oplus \w$, $i=1,2$.
\end{Lemma}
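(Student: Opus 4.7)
My plan is to produce $E_1$ and $E_2$ as stable distributions of Anosov elements chosen from carefully selected pairs of adjacent and antipodal Weyl chambers. Since $T\S = T\o \oplus \bigoplus_{\chi'} E^{\chi'}$ and the stable distribution of an Anosov element $a$ equals $\bigoplus_{\chi'(a)<0} E^{\chi'}$, the lemma reduces to partitioning the coarse Lyapunov functionals $\{\chi' \neq \chi\}$ into two groups, each realized as the negative half-space of an Anosov element. The TNS hypothesis guarantees that no $\chi'$ is negatively proportional to $\chi$, so every other Lyapunov hyperplane $L_{\chi'}$ is distinct from $L$ and meets $L$ in codimension at least two in $\rk$.

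For the construction I will pick a point $p \in L$ in general position, lying on no other $L_{\chi'}$, which exists because the exceptional set inside $L$ is a finite union of proper subspaces. Then $-p$ is likewise in general position. A sufficiently small neighborhood of $p$ meets exactly two Weyl chambers of the action, one on each side of $L$; call them $W_p^+ \subset L^+$ and $W_p^- \subset L^-$, and similarly define $W_{-p}^{\pm}$ near $-p$. The hypothesis that every Weyl chamber contains an Anosov element then supplies Anosov $a_1 \in W_p^+$, $a_1' \in W_p^-$, $a_2 \in W_{-p}^+$, $a_2' \in W_{-p}^-$. I set $E_1 := E^s_{a_1}$ and $E_2 := E^s_{a_2}$, with associated stable foliations $\w_1 := \ws_{a_1}$ and $\w_2 := \ws_{a_2}$.

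The identifications $E^s_{a_1'} = E_1 \oplus E$ and $E^s_{a_2'} = E_2 \oplus E$ will hold because $W_p^+$ and $W_p^-$ share only $L$ as a wall, so that only the sign of $\chi$ flips from $a_1$ to $a_1'$, and analogously for $a_2, a_2'$; this also produces the foliations $\w_i \oplus \w$ as $\ws_{a_i'}$. For the direct-sum decomposition, $\chi'(-p) = -\chi'(p)$ forces the signs of $\chi'$ on $W_p^+$ and $W_{-p}^+$ to be opposite for every $\chi' \neq \chi$, so each $E^{\chi'}$ lies in exactly one of $E_1, E_2$; together with $E$ and $T\o$ this yields $E_1 \oplus E \oplus E_2 \oplus T\o = T\S$. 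The only real subtlety is the general-position choice of $p$; the rest is combinatorics of Weyl chambers, and the role of TNS is precisely to ensure the sign-flip bookkeeping is clean by preventing $L$ from coinciding with any other $L_{\chi'}$.
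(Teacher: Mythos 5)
Your proposal is correct and is essentially the paper's own argument: the paper also selects Anosov elements in the four Weyl chambers adjacent to $L$ on both sides and near both ``ends'' (it organizes this by intersecting with a generic $2$-plane and ordering the traces of the Lyapunov hyperplanes cyclically, whereas you use a generic point $p\in L$ and its antipode $-p$), and then reads off $E_1$, $E_2$, $E_1\oplus E$, $E_2\oplus E$ as stable distributions via exactly the same sign bookkeeping, with TNS guaranteeing that $L$ coincides with no other oriented Lyapunov hyperplane. The two formulations differ only in packaging, not in substance.
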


\Proof
Consider a generic plane $P$ in $\rk$ which intersects
different Lyapunov hyperplanes in distinct lines $L_i$.  Recall that the TNS
assumption implies that each Lyapunov hyperplane bounds a unique
negative Lyapunov half-space.  Thus the  $L_i$ are naturally oriented, and we can order them   cyclically $L=L_1, L_2, ..., L_n$.  Let $m$ be the index
such that $-L_1$ is between $L_m$ and
$L_{m+1}$. There are two Weyl chambers in the negative Lyapunov half-space
$L_1^-$ whose intersections with the plane $P$ border $L_1$. By assumption,
there exist Anosov elements in these Weyl chambers, which we denote $a_1$
and $a_2$. Similarly, there are two Weyl chambers across $L_1$ in the positive
Lyapunov half-space $L_1^+$. We denote Anosov elements in these Weyl chambers by $c_1$ and $c_2$. More precisely, if we order the Weyl
chambers intersecting $P$ cyclically from $L_1$: $\c _i$, $i=1, ..., n$
then we can take $a_1 \in \c _1$, $a_2 \in \c _m$, $c_2 \in \c _{m+1}$,
$c_1 \in \c _n$.

 Denote the coarse Lyapunov distribution corresponding to
$L_i$ by $E^i$. Note that $E=E^1$.  Then one can see that $T\S = E_1 \oplus E \oplus E_2 \oplus T\o $,
where
$$ E_2 := E^s_{c_2} = E^2 \oplus .... \oplus E^m \qquad
E^s_{a_2} = E^1 \oplus .... \oplus E^m = E \oplus E_2 \qquad \text{and}$$
$$ E_1 := E^s_{c_1} = E^{m+1} \oplus .... \oplus E^n \qquad
E^s_{a_1} = E^{m+1} \oplus .... \oplus E^n \oplus E^1 = E_1 \oplus E
.$$
Since stable distributions of Anosov elements integrate to invariant foliations, the claim follows. \QED

We will show that the holonomies along $\w_i$, $i=1,2$ between leaves
of $\w$ are $\ci$. This follows from the existence of an element which contracts
$\w_1$ (resp. $\w_2$) faster than it does $\w$.

\begin{proposition} \label{fast stable}
In the above notations, for $i=1,2$, there exist elements $b_i \in \rk$ such that
$b_i$ contracts $\w_i$ faster than it does $\w$, i.e.
\begin{equation} \label{domin}
\| D (\ta (b_i)) |_{E_i} \|  <  \| D (\ta (-b_i)) |_E \|^{-1} \le  \| D (\ta (b_i)) |_E \| <1 .
\end{equation}

\end{proposition}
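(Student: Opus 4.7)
My plan is to take $b_1$ to be a large positive multiple of a carefully chosen element $b \in \c_1$ that lies close to $L$; the case $i=2$ is entirely analogous with $(a_2,c_2)$ in place of $(a_1,c_1)$. Consider the family $b(t) := a_1 + t c_1$ for $t\ge 0$, where $a_1$ and $c_1$ are the Anosov elements from the proof of Lemma \ref{splittings}. For each Lyapunov functional $\chi^{(j)}$ with $j\ne 1$, the values $\chi^{(j)}(a_1)$ and $\chi^{(j)}(c_1)$ have the same sign: both negative for $j = m+1,\dots,n$ (since $E^j \subset E^s_{a_1}\cap E^s_{c_1}$) and both positive for $j = 2,\dots,m$ (since $E^j \subset E^u_{a_1}\cap E^u_{c_1}$). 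Hence $b(t)$ crosses no Lyapunov hyperplane other than $L$, meeting $L$ precisely at $t^* := -\chi^{(1)}(a_1)/\chi^{(1)}(c_1) > 0$, and $b(t) \in \c_1 \subset L^-$ for all $t \in [0,t^*)$.

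Given a small $\e>0$ (to be specified), let $\eta>0$ be the constant from Proposition \ref{e-slow}. Choose $t < t^*$ close enough to $t^*$ so that $\d(b(t),L) \le \eta\e$, and set $b := b(t)$. Proposition \ref{e-slow} then gives
\[
\|D\ta(nb)|_E\|^{\pm 1} \le C(b,\e)\, e^{n\e}, \qquad n \ge 1.
\]
By commutativity, $\ta(nb) = \ta(na_1)\circ \ta(ntc_1)$. Since $E_1 \subset E^s_{a_1}$ and $E_1 = E^s_{c_1}$, uniform Anosov contraction of $a_1$ and $c_1$ on these stable distributions gives constants $\lambda_1,\lambda_2 > 0$ with $\|D\ta(na_1)|_{E_1}\|\le C e^{-n\lambda_1}$ and $\|D\ta(ntc_1)|_{E_1}\|\le C e^{-nt\lambda_2}$, hence
\[
\|D\ta(nb)|_{E_1}\|\le C^2\, e^{-n\kappa}, \qquad \kappa := \lambda_1 + t\lambda_2 \ge \lambda_1.
\]
Crucially, $\kappa$ is bounded below by $\lambda_1>0$ independently of $t$.

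Now fix $\e < \lambda_1$ at the outset, and set $b_1 := nb$ with $n$ large. Because $b \in \c_1$ lies in a Weyl chamber containing Anosov elements, by openness of the Anosov set in $\rk$ (adjusting $t$ within the admissible range if necessary) we may assume $b$ itself is Anosov; the sign pattern defining $\c_1$ then forces $E^s_b = E \oplus E_1$, so $E \subset E^s_b$ and $\|D\ta(nb)|_E\| \le C' e^{-n\lambda_E}$ for some $\lambda_E>0$. Combining the three estimates, for $n$ large,
\[
\|D\ta(b_1)|_{E_1}\| \le C^2 e^{-n\kappa} < C(b,\e)^{-1}\, e^{-n\e} \le \|D\ta(-b_1)|_E\|^{-1} \le \|D\ta(b_1)|_E\| \le C' e^{-n\lambda_E} < 1,
\]
where the leftmost strict inequality uses $\kappa > \e$, the middle $\le$ is automatic from $\|D\ta(b_1)|_E\|\cdot \|D\ta(-b_1)|_E\|\ge 1$, and the rightmost strict inequality is the strict contraction of $b$ on $E$.

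The main obstacle is the apparent tension between needing $b$ close to $L$ (so that Proposition \ref{e-slow} forces an arbitrarily small growth rate $\e$ on $E$) and yet requiring strictly faster contraction on $E_1$. The commutative decomposition $b = a_1 + tc_1$ is what resolves this: the contraction rate on $E_1$ decomposes additively, and the contribution $\lambda_1$ coming from the fixed Anosov element $a_1$ is a constant uniform Anosov rate that does \emph{not} deteriorate as $b$ approaches $L$. Thus $\kappa \ge \lambda_1$ stays bounded away from the controllable rate $\e$, and large iterates $n$ produce the desired single-step inequality for $b_1 = nb$. A secondary technical point is ensuring $b$ is Anosov, which follows from openness of the Anosov set in $\rk$ together with the assumption that every Weyl chamber contains an Anosov element.
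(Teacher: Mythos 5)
Your overall strategy is the right one and is close in spirit to the paper's: take an element near the wall $L$ so that Proposition \ref{e-slow} pins the behaviour on $E$ between $e^{\pm\e n}$, and observe that because this element contains a full copy of the Anosov element $a_1$ (whose stable distribution contains $E_1$), the contraction rate on $E_1$ stays bounded below by a fixed $\lambda_1>0$ that does not degenerate as you approach $L$. The left inequality of \eqref{domin} then follows for large $n$ exactly as you say. The gap is in the rightmost inequality $\| D(\ta(b_1))|_E\|<1$. You obtain it from the claim that $b=a_1+tc_1$, with $t$ close to $t^*$ and hence $b$ within $\eta\e$ of the Lyapunov hyperplane $L$, is itself an Anosov element with $E\subset E^s_b$, citing ``openness of the Anosov set in $\rk$.'' Openness only produces Anosov elements in a neighborhood of the known Anosov elements (here $a_1$, its positive multiples, etc.); it does not give Anosov elements arbitrarily close to the chamber wall, which is where your $b$ is forced to live by the requirement $\d(b,L)\le\eta\e$. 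It is not assumed in the theorem, and not known in this generality, that \emph{every} element of a Weyl chamber containing an Anosov element is Anosov -- the standing hypothesis is only that \emph{some} Anosov element exists in each chamber, and the paper is careful to use only the chosen elements $a_1,a_2,c_1,c_2$. (Rescaling toward the origin does produce Anosov elements close to $L$ in distance, but then the contraction rate on $E_1$ shrinks proportionally and the lower bound $\kappa\ge\lambda_1$ is lost, which is exactly the pinching problem the construction is meant to avoid.) Without Anosovness of $b$, Proposition \ref{e-slow} only gives $\|D(\ta(nb))|_E\|\le Ce^{\e n}$, which is not contraction, so the chain in \eqref{domin} is not established.

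The paper closes precisely this hole by not using the near-wall element alone: it sets $b_1=t(b'+sa_1)$, where $b'\in L^-$ is the near-wall convex combination (your $b$, up to positive scale) and $s>0$ is small but fixed. The added $sa_1$ contributes uniform contraction on $E$ at rate roughly between $s\chi$ and $s\chi'$ (with $\chi,\chi'$ the slowest and fastest uniform rates of $a_1$ on $E$), while the contraction on $E_1$ is at rate at least $\chi+s\chi$; choosing $\e/\chi<s<(\chi-\e)/(\chi'-\chi)$ yields all three inequalities of \eqref{domin} simultaneously, with no Anosov claim about near-wall elements. Your argument is repaired in exactly this way: replace your $b$ by $b+sa_1$ with $s$ small and rerun your estimates, using the uniform two-sided bounds for $a_1$ on $E$ in place of the assertion that $b$ is Anosov; at that point your proof becomes essentially the paper's.
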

Since the faster part of an (un)stable foliation is $\ci$
inside of an (un)stable leaf, see for example \cite[Proposition 5.1]{KaSa3} or \cite[Proposition 3.9]{KaSa4}, we obtain the following corollary:

\begin{corollary} \label{smooth}
In the above notations, for $i=1,2$, the leaves of $\w_i$ vary smoothly
along the leaves of $\w$, and the holonomies along $\w_i$
between leaves of $\w$ are $\ci$.
\end{corollary}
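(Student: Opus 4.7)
The plan is to construct $b_i$ as a sufficiently large positive multiple of a suitable convex combination of the Anosov elements $a_i, c_i$ furnished by Lemma \ref{splittings}. Set $b(t) := (1-t) a_i + t c_i$ for $t \in [0,1]$. For every Lyapunov functional $\chi$ associated with a coarse Lyapunov subspace of $E_i$ one has both $\chi(a_i) < 0$ and $\chi(c_i) < 0$, since $E_i$ lies in both stable distributions, so $\chi(b(t)) < 0$ uniformly on $[0,1]$. On the other hand, the Lyapunov functional on $E$ is negative at $a_i$ but positive at $c_i$, so there is a unique $t^* \in (0,1)$ with $b(t^*) \in L$; for $t \in (0, t^*)$ the element $b(t)$ lies in the Weyl chamber of $a_i$, hence is Anosov with $E^s_{b(t)} = E_i \oplus E$, and $b(t) \to L$ as $t \nearrow t^*$.

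The core estimate is a uniform upper bound on the top Lyapunov exponent of $\ta(b(t))$ along $E_i$, over all $\ta$-invariant ergodic measures $\nu$, that does not degenerate as $b(t) \to L$. For any such $\nu$ and any Oseledec subspace contained in $E_i$ with linear Lyapunov functional $\chi$ on $\rk$, the uniform Anosov contraction of $c_i$ on $E_i = E^s_{c_i}$ forces $\chi(c_i) \le -\lambda_{c_i}$, and similarly $\chi(a_i) \le -\lambda_{a_i}$, where $\lambda_{a_i}, \lambda_{c_i} > 0$ are the uniform Anosov rates of $a_i$ and $c_i$. Since Lyapunov functionals of an $\rk$-action are linear,
\[
\chi(b(t)) = (1-t)\chi(a_i) + t\chi(c_i) \le -\lambda_{\min}, \qquad \lambda_{\min} := \min(\lambda_{a_i}, \lambda_{c_i}) > 0,
\]
uniformly in $t \in [0,1]$, in $\nu$, and in the choice of Oseledec subspace. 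Feeding this bound into the subadditive uniformization argument used in the proof of Proposition \ref{e-slow} upgrades it to
\[
\|D\ta(nb(t))|_{E_i}(x)\| \le C_1 e^{-\lambda_{\min} n/2} \quad \text{for every } x \in \S \text{ and every } n \ge n_0.
\]

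Now fix $\epsilon$ with $0 < \epsilon < \lambda_{\min}/4$. By Proposition \ref{e-slow}, choosing $t$ close enough to $t^*$ (equivalently $b(t)$ close enough to $L$) gives $\|D\ta(\pm nb(t))|_E\| \le C_2 e^{\epsilon n}$ for all $n \ge 0$; in addition, since $b(t)$ is Anosov with $E \subset E^s_{b(t)}$, we have $\|D\ta(nb(t))|_E\| \le K e^{-\lambda_* n}$ for some $\lambda_* > 0$, possibly very small. Set $b_i := N b(t)$ with $N$ a large positive integer. Then
\[
\|D\ta(b_i)|_{E_i}\| \le C_1 e^{-\lambda_{\min} N/2}, \qquad \|D\ta(-b_i)|_E\|^{-1} \ge C_2^{-1} e^{-\epsilon N},
\]
so the strict inequality $\|D\ta(b_i)|_{E_i}\| < \|D\ta(-b_i)|_E\|^{-1}$ holds for $N$ large since $\lambda_{\min}/2 > \epsilon$; the middle inequality $\|D\ta(-b_i)|_E\|^{-1} \le \|D\ta(b_i)|_E\|$ is automatic from $\|A\|\cdot\|A^{-1}\|\ge 1$, and $\|D\ta(b_i)|_E\| \le K e^{-\lambda_* N} < 1$ for $N$ large enough.

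The main obstacle is producing the uniform contraction estimate on $E_i$ with a rate bounded away from $0$ independently of the distance from $b(t)$ to $L$: the bound available from $b(t)$'s own Anosov rate $\lambda_*$ controls only the slowest contraction on $E_i \oplus E$, which is carried by $E$ and therefore degenerates to $0$ as $b(t) \to L$. The resolution is to transfer the fixed Anosov rates $\lambda_{a_i}, \lambda_{c_i}$ of the endpoints $a_i, c_i$ to the whole segment via the linearity of Lyapunov exponents in the $\rk$-parameter, and then uniformize over all $\ta$-invariant measures by the subadditive argument of Proposition \ref{e-slow}.
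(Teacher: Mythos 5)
Your overall strategy (produce an element satisfying the domination \eqref{domin} and then quote the smoothness of the fast subfoliation inside a stable leaf) is the paper's strategy, but your construction of $b_i$ has a genuine gap at the decisive point: the inequality $\| D (\ta (b_i)) |_E \| <1$, i.e.\ that $b_i$ actually (uniformly) contracts $E$. You derive it from the assertion that $b(t)$, being in the same Weyl chamber as $a_i$, ``is Anosov with $E \subset E^s_{b(t)}$'' and hence contracts $E$ at some rate $\lambda_*>0$. The hypotheses only provide \emph{some} Anosov element in each Weyl chamber; it is neither assumed nor proved anywhere in the paper that every element of a chamber containing an Anosov element is itself Anosov, and the tools available cannot deliver this for your $b(t)$ near the wall: Lemma \ref{upper est} bounds the top exponent of $\ta(b)$ on $E$, for an arbitrary $\ta(b)$-invariant ergodic measure, only by $\max\{0,\bar\chi(b)/\gamma\}$, which equals $0$ (not a negative number) when $\bar\chi(b)<0$, and Proposition \ref{e-slow} gives only the two-sided $\e$-slow bound. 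So for $b_i=Nb(t)$ all you know on $E$ is $\| D (\ta (b_i)) |_E \| \le C_2 e^{\e N}$, which does not exclude expansion, and then $E_i\oplus E$ is not known to lie in a stable bundle of $b_i$, so the cited fast-foliation smoothness cannot be invoked. This is exactly why the paper does not take a multiple of a convex combination alone: in the proof of Proposition \ref{fast stable} the element is $b=t(b'+sa)$, where the small additional multiple $sa$ of the Anosov element $a=a_i$ provides a definite uniform contraction of $E$ at rate roughly between $s\chi-\e$ and $s\chi'+\e$, while the contraction on $E_i$ remains at rate about $\chi+s\chi$; choosing $\e$ and $s$ with $s\chi-\e>0$ and $s\chi'+\e<\chi+s\chi$ yields all three inequalities in \eqref{domin}. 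Your argument is missing this correction term (or some substitute argument giving strict uniform contraction of $E$).

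A secondary, repairable point: your uniform contraction estimate on $E_i$ is obtained by feeding the bound $\chi(b(t))\le-\lambda_{\min}$ into the subadditive (Schreiber-type) uniformization, but that uniformization requires the exponent bound for \emph{every} ergodic measure invariant under the single map $\ta(b(t))$, whereas linearity of Lyapunov functionals on $\rk$ is available only for measures invariant under the whole action. The fix is immediate and makes the detour unnecessary: exactly as in \eqref{fs3}, commutativity and the invariance of $E_i=E^s_{c_i}\subset E^s_{a_i}$ give pointwise, for all $n$, $\| D (\ta (nb(t))) |_{E_i} \| \le C^2 e^{-n((1-t)\lambda_{a_i}+t\lambda_{c_i})}\le C^2e^{-n\lambda_{\min}}$, with no ergodic-theoretic input. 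Finally, note that your write-up stops at the construction of $b_i$; the passage from \eqref{domin} to the smooth dependence of the leaves of $\w_i$ and the smoothness of the holonomies is the (cited) fact about fast subfoliations of stable foliations, which should at least be invoked explicitly.
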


\begin{proof}  ({\em of  Proposition \ref{fast stable}.)}
We use the notations from the proof of Lemma \ref{splittings}.    We
will first find an element $b'$ close to $L$ which does not expand
or contract $E$ much, with uniform control.  Then a suitable
combination of $a_i$ with $b'$ will suffice.

We consider the case $i=1$ and denote $a=a_1$, $c=c_1$, and $F=E \oplus E_1$. The other cases are similar, and will not be discussed.
We have that $a$ uniformly contracts $F$ and
$c$ uniformly contracts $E_1$, i.e. there exist $C_1,\chi >0$ such that for all $t>0$
\begin{equation} \label{fs1}
\| D (\ta (ta)) v \| \le  C_1 e^{- \chi t} \| v \| \quad  \forall v \in F, \qquad
\| D (\ta (tc)) v \| \le  C_1 e^{- \chi t} \| v \|  \quad \forall v \in E_1
\end{equation}

 Since $E$ is a continuous distribution on $\S$ and $\S$ is compact, $a$ contracts $E$ by at most
 $\sup _{x \in \S}  \| da ^{-1}\mid  _E  (x)\|. $  Hence  there is a fastest contraction rate $\chi'$ for $a$ on
$E$ such that for some $c_2>0$ and all $t>0$
\begin{equation} \label{fs2}
\| D (\ta (ta)) v \| \ge  c_2 e^{- \chi' t} \| v \| \quad  \forall v \in E
\end{equation}

 Since $F \supset E$, equations (\ref{fs1}) and
(\ref{fs2}) imply that $\chi'>\chi$.

Let $b'=ra+(1-r)c$, $0<r<1$, be a convex combination of $a$ and $c$.
Note that by \eqref{fs1} any such $b'$ uniformly contracts $E_1$:
\begin{equation} \label{fs3}
\| D (\ta (tb')) v \| \le  C_1^2 e^{- \chi t} \| v \|  \quad \forall \, v \in E_1, \; \forall \, t>0.
\end{equation}

We will find an element satisfying \eqref{domin}  in the form $b=t(b' + s a)$,
where $t>0$ is large and $s>0$ is small. For any $\e >0$ we can choose $b'$
so that it is in $L^-$ and sufficiently close to $L$ so that Proposition \ref{e-slow}
applies and so $b'$ contracts $E$ very slowly. Then equations \eqref{fs1}, \eqref{fs2}, \eqref{fs3} yield that there exists
$K>0$ such that for all $t>0$
$$
\| D (\ta (b)) v \| \le K e^{-  (\chi+s\chi) t} \| v \| \qquad  \forall v \in E_1,
\qquad \qquad \text{and}
$$
$$
K^{-1} e^{- (s \chi' + \e) t} \| v \| \le \| D (\ta (b)) v \|
\le K e^{- (s \chi - \e) t} \| v \| \qquad  \forall v \in E.
$$
We conclude that $b$ will satisfy \eqref{domin} for sufficiently large $t$ if
we choose $\e$ and $s$ so that $s \chi' + \e < \chi+s\chi $ while $s \chi - \e>0$.
This is equivalent to
$$ \frac{\e}{\chi} < s < \frac{\chi - \e}{\chi' -\chi}$$
and hence we can choose such $s$ if $\e$ is sufficiently small.  \QED
\end{proof}

\section{Normal forms}
\label{section:normalforms}

We consider the suspension action $\ta$ of $\rk$ on $\S$.
We fix a Lyapunov hyperplane $L \subset \rk$ and denote by $E$ and $\w$
the corresponding coarse Lyapunov distribution and foliation for $\ta$.

In this section we study properties of the action along the leaves of $\w$ and
introduce smooth coordinate changes along the leaves of $\w$ with respect
to which the elements act as certain polynomials. This method was introduced
to the study of local rigidity of higher rank abelian actions in \cite{KS97} and
uses the nonstationary normal forms of smooth contractions developed in \cite{GK,G}.
In contrast to the case of small perturbations of algebraic actions considered
in  \cite{KS97}, the action $\ta$ may not have the so-called  ``narrow band" property.
Instead of uniform growth estimates given by the narrow Mather spectrum,
we have to use nonuniform estimates given by the Multiplicative
Ergodic Theorem for the measure $\mu$. Therefore, the coordinate
changes will vary on $\S$ not continuously but measurably.

Let $a$ be an element in the negative Lyapunov half-space $L^- \subset \rk$,
so that $f = \ta (a)$ contracts $\w$. We will view it as a measure-preserving
system $(f,\mu)$. Its action along $\w$, $f : \w (x) \to \w (fx)$, defines
an extension  $\Phi:\S \times \Rm \to \S \times\Rm$ of $f$, where $m = \dim \w$.
Indeed, the leaf $\w (x)$ can be smoothly identified with the tangent space
$E(x)$, and the distribution $E$ can always be measurably trivialized on a set of full
measure. The
extension $\Phi_a$ preserves the zero section and acts by $\CI$ diffeomorphisms
in the fibers. In other words, $\Phi_a$ can be written in coordinates
$(x,t)\in \S \times\Rm$ as
$$\Phi_a(x,t)=(f(x), F_x(t)) $$
where $F_x(0)=0$ and $F$ is  $\CI$ in $t$.
We  will allow coordinate
changes which  are measurable in $x$, preserve each fiber $\Rmx$,  fix
the origin, are $\CI$ in each fiber, and have tempered  logarithms of
all derivatives of all orders at the zero section. We will call such
coordinate changes {\it admissible}. Recall that a real-valued function
$\varphi$ is called tempered with respect to the action $\ta$
if $\lim_{\, b \to\infty} \, \| b \|^{-1}\varphi(\ta(b)\,x)=0$ for $\mu$ - a.e. $x$.

The derivatives in the $t$ variable at the zero section  define a linear
extension of $f$, which we will denote  by $D_0 F_x$ and call the
derivative extension.  Note that $D_0 F_x$ are bounded functions on $\S$
and that this extension has negative Lyapunov exponents. Let
$\chi_1,\dots\,\chi_l$ be the different Lyapunov
exponents of the derivative extension and  $m_1,\dots,m_l$ be
their multiplicities.  Represent $\Rm$ as the
direct sum of the spaces  $\mathbb R^{m_i},\dots, \mathbb R^{m_l}$ and let
$(t_1,\dots,t_l)$ be the corresponding coordinate representation of
a vector $t\in\Rm$. Let $P: \Rm\to\Rm;\,\,(t_1,\dots,t_l)\mapsto
(P_1(t_1,\dots,t_l),\dots,P_l(t_1,\dots,t_l))$ be a polynomial
map preserving the origin. We will say that the map $P$ is of
{\it subresonance type} if it contains only such  homogeneous terms
in $P_i(t_1,\dots,t_l)$ with degree of homogeneity $s_j$ in the
coordinates of $t_j,\,\,i=1,\dots,l$ for which the  subresonance
relation $\chi_i \le \sum_{j\neq i}s_j\chi_j$ holds. There are only
finitely many subresonance relations and it is known \cite{G,GK} that
polynomial maps of the subresonance type with invertible derivative
at the origin generate a finite-dimensional Lie group. We will denote
this group by $SR_{\chi}$.

\begin{proposition} \label{normal form}

There exists an admissible coordinate change in $\S \times\Rm$
which transforms the extensions $\Phi_a$ for all $a \in L^-$ to
extensions $\Psi_a$ of the subresonance normal form
$$ \Psi(x,t)= (f(x),\mathcal P_x(t)) $$
where for almost every $ x\in X,\,\,\mathcal P_x\in SR_{\chi}$.

Moreover, this admissible coordinate change transforms into such
normal form any extension $\Gamma (x,t)=(g(x),\mathcal G_x(t))$
by $\CI$ diffeomorphisms preserving the zero section of a non-singular
transformation $g$ of $(\S,\mu)$ which commutes with $\Phi_a$ for
some $a \in L^-$.

\end{proposition}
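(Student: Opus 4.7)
The plan is to apply the measurable nonstationary normal forms theorem of Guysinsky--Katok \cite{GK, G} to a single well-chosen element $a_0 \in L^-$, and then use commutativity of the $\rk$-action to bootstrap to the full half-space $L^-$ and to the commuting extension $\Gamma$.

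First I would fix $a_0 \in L^-$ so that $f_0 = \ta(a_0)$ contracts $\w$. The derivative extension $D_0 F^{a_0}$ has negative Lyapunov exponents $\chi_1(a_0), \ldots, \chi_l(a_0)$ corresponding to the refined splitting $E = \bigoplus E_{\chi_i}$ of the coarse Lyapunov distribution. The fiber maps are $\CI$ and have tempered logarithms of all derivatives (by the Multiplicative Ergodic Theorem applied to each jet bundle, since the action preserves the finite measure $\mu$). The Guysinsky--Katok normal form theorem then yields a measurable, admissible coordinate change $H_x:\Rm\to \w(x)$ which transforms $\Phi_{a_0}$ into the subresonance normal form $\Psi_{a_0}(x,t)=(f_0(x),\mathcal P^{a_0}_x(t))$ with $\mathcal P^{a_0}_x\in SR_{\chi}$ almost everywhere.

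Next I would show the same $H_x$ normalizes $\Phi_a$ for every $a\in L^-$. By commutativity of the $\rk$-action, $\Phi_a$ and $\Phi_{a_0}$ commute, so after applying $H$ the transformed extension $\Psi_a := H^{-1}\circ\Phi_a\circ H$ commutes with $\Psi_{a_0}$. The key point is a \emph{centralizer} statement: any fiber-preserving $\CI$ extension with tempered derivatives which commutes with a fiberwise subresonance contraction of $SR_\chi$-type must itself be fiberwise polynomial of subresonance type. This is proved by expanding the fiber map $\Psi_a$ in its Taylor series at the origin and observing that any homogeneous term violating a subresonance relation $\chi_i \le \sum_{j\ne i}s_j\chi_j$ would grow, under iterated conjugation by $\Psi_{a_0}$, faster than any tempered rate — contradicting temperedness; the argument is the measurable analogue of the Sternberg linearization argument carried out in \cite{GK,G}. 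Since the Lyapunov exponents for $\Phi_a$ along $E$ are of the form $c_i\chi_i(a_0)$ for the same coarse-Lyapunov ratios (the coarse class is a single positive ray), the subresonance relations are preserved, so $\mathcal P^a_x\in SR_\chi$ as required.

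For the moreover statement, if $\Gamma(x,t)=(g(x),\mathcal G_x(t))$ commutes with $\Phi_a$ for some $a\in L^-$, then after the same coordinate change $H$ the transformed extension $H^{-1}\circ\Gamma\circ H$ commutes with a fiberwise subresonance contraction $\Psi_a$, and the same centralizer argument gives that its fiber maps lie in $SR_\chi$; temperedness of derivatives of $\Gamma$ follows because $g$ is non-singular for the finite measure $\mu$, so the transfer of the tempered jets by $H$ remains tempered. The main obstacle I expect is executing the centralizer step rigorously in the measurable, tempered setting: one must confirm that the classical subresonance centralizer calculation for a single smooth contraction passes through the random framework of \cite{GK}, using temperedness of the higher-order Taylor coefficients of $H^{-1}\circ\Phi_a\circ H$ along the $\Phi_{a_0}$-orbit rather than the uniform spectral bounds available in the narrow-band situation of \cite{KS97}.
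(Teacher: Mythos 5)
Your proposal follows essentially the same route as the paper: a measurable nonstationary normal form for a single element of $L^-$, then a centralizer argument to handle all other $a \in L^-$ and the commuting extension $\Gamma$, using the coincidence of subresonance relations that comes from positive proportionality of the Lyapunov exponents within one coarse Lyapunov class. The only real difference is that the paper does not re-derive the centralizer step: it cites the measurable normal form theorem and the ``centralizer theorem'' of Kalinin--Katok \cite[Theorems 6.1 and 6.3]{KaK} (the measurable, tempered versions, rather than the uniform narrow-band theory of \cite{GK,G}), so the obstacle you flag at the end is already resolved in the literature.
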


\begin{proof}
We note that since $E$ is a coarse Lyapunov distribution, all Lyapunov
exponents of $\ta$ corresponding to $E$ are, by definition, positively
proportional. Therefore, the extensions $\Phi_a$ for all $a \in L^-$ are
contractions with the same subresonance relations.  The existence of
an admissible coordinate change for a single $a^\ast  \in L^-$ is given by
Theorem 6.1 in \cite{KaK}. Since $\Phi_a$ commutes with $\Phi_{a^\ast}$,
the ``centralizer theorem" \cite[Theorem 6.3]{KaK} yields that this coordinate
change brings any other $\Phi_a$, for $a \in L^-$,
to the subresonance normal form of $\Phi_{a^\ast}$. The coincidence of
resonances implies that this normal form is also the normal form for $\Phi_a$.
Then the ``centralizer theorem" can be applied to this coordinate change
with any $a \in L^-$ and yields the second part of the proposition.
 \QED
\end{proof}

\section{Commuting holonomies}  \label{commuting holonomies}

Let $\mathcal{W}$ be a coarse Lyapunov foliation as in
Section \ref{Smooth holonomies} with a complementary foliation
$\mathcal{W} _1$ as in Lemma \ref{splittings}. To simplify notations
in Sections \ref{commuting holonomies} and \ref{limit argument} we will
denote the corresponding foliations for the algebraic action $\tr$
by $\mathcal{W}^*$
and $\mathcal{W}^*_1$ respectively. In this section we will study the
holonomies along $\mathcal{W} _1$ between leaves of $\mathcal{W}$.
While in a general setting holonomy along a foliation is only a locally
defined operation, in our setting holonomies are realized by global homeomorphisms. Before specifying this we introduce some
notations and describe the algebraic foliations $\mathcal{W}^*$
and $\mathcal{W}^*_1$.

We have two actions  $\tilde \alpha$ and its linearization $\tilde \rho$
on the suspension manifold $\mathcal{S}$.  Recall that $\mathcal{S}$
is a homogeneous space $S / \Lambda$ where $S = \R^k \ltimes N$
is a solvable Lie group and $\Lambda = \Z^k \ltimes \Gamma$ is a
lattice in $S$. A left coset foliation is the foliation defined by orbits
of some subgroup $D < S$.  The foliations $\mathcal{W}^*$ and
$\mathcal{W}^*_1$, as well as other coarse Lyapunov and stable
foliations for the $\tilde \rho$ on $\mathcal{S}$, are left coset foliations.
This is most easily seen at the level of Lie algebras.
Let $\mathfrak s$ be the Lie algebra of $S$ and $\mathfrak n$ the
Lie algebra of $N$.  We can identify the tangent bundle of
$\mathcal{S}$ with $\mathcal{S} \times \mathfrak{s}$.
The fibration $N/\Gamma \rightarrow \mathcal{S} \rightarrow  \R ^k / \Z ^k$
of $N/\Gamma$ defines a foliation whose tangent bundle  is given by
$\mathfrak{n}$ in this identification.  Since $\tr$ is the suspension of the
action $\rho$ by affine automorphisms of $N/\Gamma$ any ``dynamical"
foliation as above is tangent to an invariant distribution given by a
subspace $\mathfrak{d} \subset \mathfrak{n}$, which by integrability
is a Lie subalgebra of $\mathfrak{n}$. This makes the corresponding
foliation into a left coset foliation for the subgroup $D<N$ such that
$Lie(D)=\mathfrak d$. For the coarse Lyapunov foliation $\mathcal{W}^*$,
and for the complementary foliation $\mathcal{W}^*_1$ we will denote
the corresponding nilpotent groups by $\bold{W}$ and $\bold{W}_1$.

Recall that by Lemma \ref{splittings} $\mathcal{W}^*$ and $\mathcal{W}^*_1$
subfoliate the leaves of $\mathcal{W}^* \oplus \mathcal{W}^*_1$, which is
a stable foliation for $\tr$. Moreover, on each leaf of this foliation
they form a global product
structure. This can be seen on the universal cover, which for the algebraic
action on $N/\Gamma$ can be identified with the Lie algebra $\mathfrak{n}$.
We choose any element $b \in \bold{W}_1$ and denote the translation
action of $b$ on $\mathcal{S}$ by $L_b (x) =b {\cdot} x=bx$.
Then for any such $b$ and
any $x$ in $\mathcal{S}$ the holonomy along $\mathcal{W}^*_1$ is
a diffeomorphism between $\mathcal{W}^*(x)$ and $\mathcal{W}^*(bx)$,
which we denote by $h^*_{b,x}$.

Similarly, for any $b$ in $\bold{W}_1$ and any $x$ in $\mathcal{S}$ we denote
by $h_{b,x}$ the holonomy along $\mathcal{W}_1$ between $\mathcal{W}(x)$
and $\mathcal{W}(bx)$. Since the conjugacy $\tp$ maps $\mathcal{W}$ to
$\mathcal{W}^*$ and $\mathcal{W}_1$ to $\mathcal{W}^*_1$  we see that
$h_{b,x}$ is a global homeomorphism and $h_{b,x}^* \circ \tp = \tp \circ h_{b,x}$.
Moreover, $h_{b,x}$ is a diffeomorphism by Corollary \ref{smooth}.

In order to use Proposition \ref{normal form} we can use the holonomies
$h_{b,x}^*$ and $h_{b,x}$ to define bundle maps in the following manner.  We take a
bundle ${\mathcal S} \times \bold{W}$ with base $\mathcal S$ and
fiber $\bold{W}_s$, which we think of as the leaf of $\mathcal{W}^*$
or $\mathcal{W}$ through $s$.
For the leaves of $\mathcal{W}^*$ we
have a natural identification of $\bold{W}_s$ with $\mathcal{W}^*(s)$
given by left translations: $w \mapsto w\cdot s$. For $\mathcal{W}$
we fix some smooth identification that depends continuously on $s$
in $\ci$ topology on compact subsets of $\bold{W}$. The holonomy
$h_b^*$ can now be viewed as a bundle map
$h^*_b: \mathcal{S} \times \bold{W} \rightarrow  \mathcal{S} \times \bold{W}$
covering the left translation $L_b$ in the base, where $h^*_b(x,w)=h^*_{b,x}(w)$.

Similarly, we define $h_b$ via the equation $h_b(x,w)=h_{b,x}(w)$ and $h_b$ 
is a bundle map $h_b : \mathcal{S} \times \bold{W} \rightarrow  \mathcal{S} \times \bold{W}$
which is smooth along the fibers and covers the homeomorphism
$\phi^{-1} \circ L_b \circ \phi$ of $\S$. Note that this homeomorphism
preserves the invariant measure $\mu$ on $\S$ since $L_b$
preserves the Lebesgue measure $\lambda = \tp _* (\mu)$.
Since the nilpotent
group $\bold{W}$ is diffeomorphic to $\R^m$, we are in the setup of
Proposition \ref{normal form}.
The actions $\ta$ and $\tr$
also lift naturally to the corresponding actions on the bundle
$\mathcal{S} \times \bold{W}$. Slightly abusing notations we will denote
the lifts by the same letters. Since we do not know the smoothness of
$\mathcal{W}$, we can only say that the lift of $\ta$ is smooth along the
fibers. Note that the natural extension of $\tp$ to $\mathcal{S}\times\bold{W}$
conjugates the lifts of the actions as well as the holonomy maps $h_b$
and $h_b^*$.

We will use the
algebraic structure of $\tr$ to show that $h_b^*$ commute with certain
elements of $\tilde \rho(\R^k)$ and then use this to conclude that
$h_b$  commute with certain elements of  $\tilde \alpha(\R^k)$.
The main goal of this section is to prove the following theorem.

\begin{theorem}
\label{lemma:normalformholonomyfullstable}
If the action $\rho$ is semisimple, then for every $b \: {\in} \:\bold{W}_1$
the maps $h_b$ for all $ b \in \bold{W}$ preserve, $\mu$-almost everywhere,
a fixed normal form along leaves of $\mathcal{W}$.
\end{theorem}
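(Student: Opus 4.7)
The plan is to apply the centralizer clause of Proposition~\ref{normal form}: it will be enough to exhibit, for each $b\in\bold{W}_1$, an element $a\in L^-$ such that the bundle map $h_b$ commutes with the bundle lift of $\ta(a)$ on $\S\times\bold{W}$. Because $\tp$ conjugates $\ta$ to $\tr$ and sends $\w,\w_1$ to the homogeneous foliations $\w^*,\w^*_1$, this will in turn reduce to finding $a\in L^-$ whose translation action $\tr(a)$ commutes with $L_b$ on $\S$. In the group $S=\rk\ltimes N$, the left translations by $(a,e)$ and $(0,b)$ commute precisely when $\rho(a)(b)=b$.

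I would next invoke the semisimplicity of $\rho$ to diagonalize the commuting family $\tr(\rk)$ over $\C$. This yields a $\tr$-invariant real decomposition $\mathfrak{n}=\bigoplus_j \mathfrak{n}_j$ in which each $\mathfrak{n}_j$ is either $1$-dimensional with a real character $\chi_j:\rk\to\R$, or $2$-dimensional carrying a complex structure under which $\tr(a)$ acts as multiplication by $e^{\chi_j^{\C}(a)}$ for an $\R$-linear $\chi_j^{\C}:\rk\to\C$. Intersecting with the Lie algebra $\mathfrak{w}_1$ of $\bold{W}_1$ gives $\mathfrak{w}_1=\bigoplus_j \mathfrak{u}_j$ with $\mathfrak{u}_j=\mathfrak{w}_1\cap\mathfrak{n}_j$. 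For each $j$ with $\mathfrak{u}_j\neq 0$ I would select $a_j\in L^-$ so that $\tr(a_j)$ is the identity on $\mathfrak{n}_j$: in the real case this asks for $a_j\in\ker\chi_j\cap L^-$, which is nonempty because TNS and the fact that $\mathfrak{n}_j\subset\mathfrak{w}_1$ together force $\ker\chi_j\neq L$; in the complex case I would solve $\chi_j^{\C}(a_j)\in 2\pi i\Z\setminus\{0\}$, using the $\R$-linear independence of $\mathrm{Re}\,\chi_j^{\C}$ and $\mathrm{Im}\,\chi_j^{\C}$ to place $a_j$ inside $L^-$.

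With such $a_j$ fixed, $\rho(a_j)$ fixes every $X\in\mathfrak{u}_j$ and hence every $b=\exp(X)$, yielding $\tr(a_j)\circ L_b=L_b\circ\tr(a_j)$ on $\S$. Conjugating by $\tp$ transports this to the commutation of the base maps of $\ta(a_j)$ and $h_b$ on $\S$; since both maps preserve $\w$ and $\w_1$ and the global product structure on the stable leaves, the uniqueness of the $\w_1$-holonomy upgrades the identity to commutation of the bundle maps on $\S\times\bold{W}$. At this point Proposition~\ref{normal form} applies with $h_b$ in the role of the commuting extension and $a_j$ as the reference element of $L^-$, so $h_b$ is brought to subresonance normal form by the same admissible coordinate change that linearizes the action. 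Thus $h_b$ preserves the fixed normal form $\mu$-a.e. For a general $b\in\bold{W}_1$ I would decompose it, via the Baker--Campbell--Hausdorff formula and nilpotency of $\mathfrak{w}_1$, as an ordered product of exponentials of elements from the subspaces $\mathfrak{u}_j$; the multiplicativity $h_{bb'}=h_b\circ h_{b'}$ of holonomies then propagates the conclusion from the factors to the product.

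The main obstacle is the selection of $a_j$ in the complex-character case: the affine codimension-$2$ subspace $\{\chi_j^{\C}=2\pi i n\}$ must be shown to meet $L^-$ for some nonzero $n\in\Z$, and ruling out degenerate configurations in which this subspace could sit inside $L$ or its closure requires combining TNS with the independence of $\mathrm{Re}\,\chi_j^{\C}$ and $\mathrm{Im}\,\chi_j^{\C}$, possibly varying $n$. A secondary subtlety is lifting commutation from base maps on $\S$ to bundle maps on $\S\times\bold{W}$, for which one uses that $\ta(a_j)$ preserves both $\w$ and $\w_1$ together with the global product structure. Once these points are settled, the rest is a routine appeal to the centralizer clause of Proposition~\ref{normal form}.
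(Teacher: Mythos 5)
Your argument is essentially the paper's: both proofs use semisimplicity to produce, for translations $b$ ranging over subgroups that generate $\mathbf{W}_1$, elements $a\in L^-$ with $\rho(a)b=b$, transport the commutation $\tr(a)\circ L_b=L_b\circ\tr(a)$ through $\tp$ to get $h_b$ commuting with the extension of $\ta(a)$, and then invoke the centralizer clause of Proposition \ref{normal form}, finishing by writing an arbitrary element of $\mathbf{W}_1$ as a product of such $b$'s. The only organizational difference is that the paper first splits $T\mathcal{W}^*_1$ into coarse Lyapunov pieces, chooses a single $v\in L'\cap L^-$ for each piece, and decomposes that piece into irreducible subspaces of the rotation (skew-symmetric) part of $\tr(v)$, whereas you pass directly to joint one- and two-dimensional eigenspaces of all of $\rho(\R^k)$; this is immaterial. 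What you flag as the main obstacle is not actually one, and you do not need to intersect the affine sets $\{\chi_j^{\C}=2\pi i n\}$, $n\neq 0$, with $L^-$. Since the Lyapunov hyperplane $\ker\chi_j$ of any exponent occurring in $E_1$ is distinct from $L$ (by the TNS assumption its functional is neither positively nor negatively proportional to those of $E$), the set $\ker\chi_j\cap L^-$ is a nonempty relatively open convex cone in the hyperplane $\ker\chi_j$: pick any $v$ in it and rescale, noting that $tv$ remains in $\ker\chi_j\cap L^-$ for every $t>0$ while $t$ can be chosen so that the rotation angle $t\,\mathrm{Im}\,\chi_j^{\C}(v)$ lies in $2\pi\Z$ (any $t$ works if $\mathrm{Im}\,\chi_j^{\C}(v)=0$). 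This is precisely the paper's ``suitable multiple $t_i v$'' in Lemma \ref{lemma:foliationwithtrivialvaction}. Relatedly, drop your requirement $n\neq 0$: when the imaginary part vanishes on $\ker\chi_j$ the only solutions have $n=0$, and $n=0$ is perfectly adequate since then $\rho(a_j)$ is already the identity on $\mathfrak{n}_j$. Your remaining steps---upgrading commutation of the base maps to commutation of the bundle maps via uniqueness of the $\mathcal{W}_1$-holonomy between fixed leaves, and generating $\mathbf{W}_1$ by products of exponentials from the subspaces $\mathfrak{u}_j$---are correct and correspond to the paper's (briefer) treatment.
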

\vskip.2cm

\begin{proof}
We begin by finding subfoliations of $\mathcal{W}_1$ for which the
holonomy commutes with some $\ta (v)$, $v \in \mathbb R^k$, contracting
$\mathcal{W}$. To do this we will work with the algebraic action $\tr$.
Recall that $T\mathcal{W}^*$ splits as a sum of coarse
Lyapunov distributions $\oplus E^j$. Let $E'$ be one of these distributions
and $L'$ be the corresponding Lyapunov hyperplane. Let $v$ be any element
of $L'$ for which $\tr (v)$ contracts $\mathcal{W}^*$. Then $\tr (v)$ acts
isometrically on a certain foliation $\mathcal{H}^*_{v}$ which is defined as
the orbits of the action of some subgroup $\mathbf {H}_{v}$ in $\bold{W}_1$.
 Since
$\tr (v)$ is semisimple by the assumption, $\mathcal{H}^*_{v}$ is in fact the full
coarse Lyapunov foliation of $\tr$ corresponding to $L'$. (If the derivative of
$\tr (v)$ on $E'$ had Jordan blocks, this would be a strict subfoliation.)
Now we can decompose the Lie algebra of  $\mathbf{H}_{v}$ into the
irreducible subspaces of the rotation defined by taking the skew symmetric
part of $\tr (v)$. We denote the resulting Lie subgroups of  $\mathbf{H}_{v}$
by $\mathbf{H}_{v,i}$.

\begin{lemma}
\label{lemma:foliationwithtrivialvaction}
For any element $v \in L'$ there are real numbers $t_i>0$
 such that for any $b \: {\in} \: \bold{H}_{v,i}$ the map $h^*_b$ commutes
 with $\tilde \rho(t_i v)$.
\end{lemma}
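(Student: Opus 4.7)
The plan is to exploit the algebraic structure of $\tr$ on $\S=S/\Lambda$ with $S=\rk\ltimes N$. Writing $A_w\in\mathrm{Aut}(N)$ for the automorphism of $N$ induced by $w\in\rk$, the semidirect product relation gives $w\cdot n=A_w(n)\cdot w$ in $S$, so that as diffeomorphisms of $\S$, left translation by $b\in N$ commutes with $\tr(w)$ exactly when $A_w(b)=b$. The whole lemma will therefore reduce to producing, for each $i$, a $t_i>0$ with $A_{t_iv}|_{\bold{H}_{v,i}}=\mathrm{id}$, together with the tautological observation that the holonomy $h^*_b$ is characterized by $\tr$-invariant foliations.

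For the first step I analyze the derivative $DA_v$ restricted to $\h_v=\mathrm{Lie}(\bold{H}_v)=E'$. Since $v\in L'$, every Lyapunov exponent of $\tr(v)$ along $E'$ vanishes, and the assumption that $\tr$ is semisimple rules out unipotent Jordan blocks. Hence $DA_v|_{\h_v}$ is orthogonal with respect to a suitable inner product, and the pieces $\h_{v,i}$ from the statement are exactly its one- or two-dimensional irreducible invariant subspaces. On each $\h_{v,i}$ the one-parameter subgroup $s\mapsto DA_{sv}=e^{sX_i}$ is either identically the identity or a rotation of angular velocity $\theta_i\neq 0$. Setting $t_i=2\pi/|\theta_i|$ in the rotation case and $t_i=1$ in the fixed case gives $t_i>0$ with $DA_{t_iv}|_{\h_{v,i}}=\mathrm{id}$. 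Because $\bold{H}_{v,i}$ is a connected Lie subgroup of the simply connected nilpotent group $N$, the exponential map then yields $A_{t_iv}(b)=b$ for every $b\in\bold{H}_{v,i}$, so that $\tr(t_iv)\cdot b=b\cdot \tr(t_iv)$ in $S$ by the semidirect product identity.

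For the second step I translate this group-level commutation into commutation of the holonomy $h^*_b$ with $\tr(t_iv)$. The foliations $\mathcal{W}^*$ and $\mathcal{W}^*_1$ are coarse Lyapunov and stable foliations for $\tr$, hence invariant under every $\tr(w)$. For any $x\in\S$ and $y\in\mathcal{W}^*(x)$, the holonomy image $h^*_{b,x}(y)$ is characterized as the unique point of $\mathcal{W}^*_1(y)\cap \mathcal{W}^*(bx)$. Applying $\tr(t_iv)$ and using the identity $\tr(t_iv)(bx)=b\,\tr(t_iv)(x)$ from the first step, invariance of the two foliations gives
\[
\tr(t_iv)\bigl(h^*_{b,x}(y)\bigr)\in \mathcal{W}^*_1\bigl(\tr(t_iv)y\bigr)\cap \mathcal{W}^*\bigl(b\,\tr(t_iv)x\bigr),
\]
and by uniqueness of the intersection this coincides with $h^*_{b,\tr(t_iv)x}\bigl(\tr(t_iv)y\bigr)$. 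Read as bundle maps on $\S\times\bold{W}$, this is the desired identity $h^*_b\circ\tr(t_iv)=\tr(t_iv)\circ h^*_b$.

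The delicate point, I anticipate, is the use of semisimplicity in the first step: without it, $DA_v|_{\h_v}$ could contain a unipotent block, and no finite $t_i$ would render $A_{t_iv}$ trivial on the corresponding part of $\bold{H}_v$. The rest of the argument is essentially bookkeeping in $S=\rk\ltimes N$ combined with the observation that foliation-defined holonomies automatically intertwine with any transformation preserving the defining foliations.
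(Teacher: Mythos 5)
Your proof is correct and takes essentially the same route as the paper's: choose $t_i$ so that the automorphism part of $\tr(t_i v)$ fixes $\mathbf{H}_{v,i}$ pointwise (possible by the zero-exponent, semisimple "rotation" structure on $E'$), conclude that left translations by its elements commute with $\tr(t_i v)$ via the semidirect product structure of $S=\rk\ltimes N$, and then transfer this to $h^*_b$ using the $\tr$-invariance of $\mathcal{W}^*$ and $\mathcal{W}^*_1$ and the product-structure characterization of the holonomy. The paper's own argument is just a terser version of this (phrased as the holonomy agreeing with holonomy along $\mathcal{H}^*_{v,i}$), so your write-up mainly supplies the details it leaves implicit.
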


 \begin{proof}
 A suitable multiple $t_i \, v$ of $v$ commutes with the group $\mathbf{H}_{v,i}$.
Hence translations by elements of $\bold{H}_{v,i}$  commute with $\tr(t_i v)$.
Then the holonomy $h_b^*$ will also commute with $\tr(t_i v)$ since it agrees
with holonomy along the foliation $\mathcal{H}^*_{v,i}$.  \QED
\end{proof}

Since $\tp$ conjugates the actions and the holonomies this lemma yields that
for any $b \: {\in} \: \bold{H}_{v,i}$ the map $h_b$ commutes with $\ta (t_i v)$.
By making different choices of the Lyapunov hyperplane $L'$, $v\in L'$, and $i$,
we can arrange so that the groups $\bold{H}_{v,i}$ generate $\bold{W}_1$.
Proposition \ref{normal form} implies that there is a common normal form for
 $h_b$ for all  $b$ in all $\bold{H}_{v,i}$. Therefore, $h_b$ with any $b$ in
$\bold{W}_1$, which is a composition of maps of this form, also preserves this normal form.
This completes the proof of Theorem \ref{lemma:normalformholonomyfullstable}.
\QED
\end{proof}

Now we give a more detailed description of the algebraic holonomies
$h_b^*$.  As a corollary we describe certain limits of the maps
$h_b^*$ and $h_b$ which will be used in the next section.

\begin{proposition}
\label{proposition:holonomiesareisometric}
For any $b \: {\in}\: \bold{W}_1$ and any $x \in \S$, the holonomy 
$h^*_{b,x} : W^* (x) \to W^* (b x)$ is equivariant  with respect to
the action of $\bold{W}$ along leaves of $\mathcal{W}^*$.
\end{proposition}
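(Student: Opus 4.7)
The plan is to reduce the equivariance claim to an explicit algebraic description of the holonomy, namely the identity $h^*_{b,x}(wx) = wbx$ for every $w \in \bold{W}$. The main ingredient will be a normalization fact: $\bold{W}$ normalizes $\bold{W}_1$ inside $N$, or equivalently
\[
[\mathfrak{w}, \mathfrak{w}_1] \;\subset\; \mathfrak{w}_1.
\]

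First I would establish this normalization. By Lemma \ref{splittings}, $E \oplus E_1$ is the stable distribution of the Anosov element $a_1$, hence integrable, so the corresponding Lie subalgebra $\mathfrak{w} \oplus \mathfrak{w}_1$ of $\mathfrak{n}$ is closed under brackets; in particular $[\mathfrak{w}, \mathfrak{w}_1] \subset \mathfrak{w} \oplus \mathfrak{w}_1$. On the algebraic side, the linear part of $\tr$ acts on coarse Lyapunov subspaces through the corresponding Lyapunov functionals, so the Lie bracket respects the coarse Lyapunov grading: $[E^1, E^j]$ is either zero or sits inside the coarse Lyapunov subspace whose functional is positively proportional to $\chi_1 + \chi_j$. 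The TNS hypothesis forces $\chi_1$ and each $\chi_j$ with $j \in \{m+1,\ldots,n\}$ to be linearly independent, so $\chi_1+\chi_j$ cannot be positively proportional to $\chi_1$. Hence no component of $[E^1, E^j]$ can lie in $E^1 = \mathfrak{w}$, and since the bracket is confined to $\mathfrak{w} \oplus \mathfrak{w}_1$, it lies entirely in $\mathfrak{w}_1$.

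Given the normalization, for any $w \in \bold{W}$ and $b \in \bold{W}_1$ we have $wbw^{-1} \in \bold{W}_1$, and the identity
\[
wbx \;=\; (wbw^{-1})\cdot(wx)
\]
exhibits $wbx$ simultaneously as a point of the $\bold{W}_1$-orbit of $wx$ (that is, of $\mathcal{W}_1^*(wx)$) and as a point of $\bold{W}bx = W^*(bx)$. By uniqueness coming from the global product structure on $\mathcal{W}^* \oplus \mathcal{W}_1^*$-leaves, this forces $h^*_{b,x}(wx) = wbx$. Equivariance is then immediate: for any $u \in \bold{W}$ the left $\bold{W}$-action on $W^*(x)$ sends $wx$ to $uwx$, and associativity of the left $N$-action on $\mathcal{S}$ yields
\[
h^*_{b,x}(u\cdot wx) \;=\; (uw)\,bx \;=\; u\cdot(wbx) \;=\; u\cdot h^*_{b,x}(wx).
\]
The main obstacle is the normalization step; once $[\mathfrak{w},\mathfrak{w}_1] \subset \mathfrak{w}_1$ is in hand, the remainder is a direct calculation inside $N$ using the global product structure already provided by Lemma \ref{splittings}.
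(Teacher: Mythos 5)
Your proof is correct and follows essentially the same route as the paper: both reduce the statement to the fact that $\bold{W}$ normalizes $\bold{W}_1$ and then read off $h^*_{b,x}(wx)=wbx$ from the identity $wbx=(wbw^{-1})\cdot(wx)$ together with the product structure of $\mathcal{W}^*$ and $\mathcal{W}^*_1$ inside the stable leaf. The only variation is in the normalization step, where the paper picks an element $s\in\R^k$ with $\tr(s)$ acting isometrically on $\mathfrak w$ and contracting exactly $\mathfrak w_1$, while you use the Lyapunov grading of the Lie bracket plus the TNS-based non-proportionality of $\chi_1$ and the $\chi_j$ --- two formulations of the same underlying fact.
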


\begin{proof}
We need to show that the holonomy along $\mathcal{W}^*_1$ commutes
with the action of $\bold{W}$ along leaves of $\mathcal{W}^*$.
First we observe that $\bold{W}$ normalizes $\bold{W}_1$.
To see this we note that there is a subgroup $\bold{W'}=\bold{W}\bold{W}_1$
in $N$.  This is the group that corresponds to the foliation
$\mathcal{W}^* {\oplus} \mathcal{W}^*_1$ as in Lemma \ref{splittings}.
We denote the Lie algebras of $\bold{W}$ and $\bold{W}_1$ by
$\mathfrak w$ and $\mathfrak w_1$. To conclude that $\bold{W}_1$ is
normal inside $\bold{W'}$ we choose an element $s \in \R^k$ for
which $\tr(s)$ acts isometrically on $\mathfrak w$ and
 contracts exactly $\mathfrak w_1$.  This is possible by
the construction of $\mathcal{W}^*$ and $\mathcal{W}^*_1$ in Lemma \ref{splittings}.  Then any bracket $[v,u]$ where  $u \in \mathfrak w$ and $v \in \mathfrak w_1$ is contracted by $\tr (s)$ and hence must be in $\mathfrak w_1$.

 Now let $a \in \bold{W}$ and $b  \in \bold{W}_1$.  For any $x \in \S$
we can write $a b  x = a b a^{-1} a  x$.  By the normalization we have
$a b a^{-1} \in \bold{W}_1$. Hence the point $a b x$ is both in
$\mathcal{W}^*(bx)$ and in $\mathcal{W}^*_1(a x)$.  This shows that
$h_{b,x}^* (ax) = abx$ for any $x \in \S$ and $b  \in \bold{W}_1$ and
proves that the holonomy $h_{b,x} ^*$
 from $\mathcal{W}^*(x)$  to $\mathcal{W}^*(bx)$ commutes with the
 action of $\bold{W}$.
\QED
\end{proof}

\begin{corollary}
\label{corollary:linearlimits}
Suppose that for some elements $b_n \in \bold{W}_1$ and some point
$x \in \S$ the sequence $b_n x$ converges to a point $y$ in $\mathcal{W}^*(x)$.
Then the holonomy maps $h_{b_n,x} ^*$ 
converge to the diffeomorphism
$T_{x,y}^* : \mathcal{W}^*(x) \to \mathcal{W}^*(x)$
given by $T_{x,y}^* (ax) =a y $ for any  $a \in \bold{W}$.
Consequently, the holonomy maps $h_{b_n,x} $ 
 converge to the homeomorphism
$\tp ^{-1} \circ T_{x,y}^* \circ \tp \;$ of the leaf $\mathcal{W}(\tp ^{-1}(x))$
uniformly on compact sets.
\end{corollary}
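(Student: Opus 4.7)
The plan is to observe that the corollary is essentially a direct unpacking of the explicit description of $h_{b,x}^*$ supplied by Proposition \ref{proposition:holonomiesareisometric}. First I would apply that proposition with $b = b_n$: every point of $\mathcal{W}^*(x)$ has the form $a x$ for a unique $a \in \bold{W}$, and
$$ h_{b_n,x}^*(ax) \;=\; a\, b_n\, x. $$
Since $b_n x \to y$ in $\mathcal{S}$ and left multiplication by any fixed element of $S$ is a homeomorphism of $\mathcal{S}$, this immediately yields $h_{b_n,x}^*(ax) \to a y = T_{x,y}^*(ax)$ pointwise on $\mathcal{W}^*(x)$. The limit lies in $\mathcal{W}^*(x)$ because $y \in \mathcal{W}^*(x)$ and $\bold{W}$ preserves this leaf.

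Next I would upgrade pointwise to uniform-on-compacta convergence using joint continuity of the multiplication map $\bold{W} \times \mathcal{S} \to \mathcal{S}$. Identifying $\mathcal{W}^*(x)$ with $\bold{W}$ via $a \mapsto ax$, any compact subset of $\mathcal{W}^*(x)$ corresponds to a compact $K' \subset \bold{W}$; by joint continuity together with compactness of $K'$, the family $\{z \mapsto az : a \in K'\}$ is equicontinuous, so $b_n x \to y$ forces $a b_n x \to a y$ uniformly in $a \in K'$. This gives the claimed convergence $h_{b_n,x}^* \to T_{x,y}^*$ uniformly on compact subsets of $\mathcal{W}^*(x)$, and $T_{x,y}^*$ is a diffeomorphism since, writing $y = cx$ with $c \in \bold{W}$, it corresponds to right translation by $c$ in the parameterization $a \mapsto ax$.

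For the second statement, I would invoke the intertwining $h_{b,x}^* \circ \tp = \tp \circ h_{b,x}$ recorded in Section \ref{commuting holonomies}, rewritten as
$$ h_{b_n,x} \;=\; \tp^{-1} \circ h_{b_n,x}^* \circ \tp $$
on $\mathcal{W}(\tp^{-1}(x))$. Since $\tp$ and $\tp^{-1}$ are bi-H\"older homeomorphisms of the compact manifold $\mathcal{S}$, they are uniformly continuous, and pre- and post-composition with such maps preserves uniform-on-compacta convergence. Applied to the already-established convergence $h_{b_n,x}^* \to T_{x,y}^*$, this delivers the claimed limit $h_{b_n,x} \to \tp^{-1} \circ T_{x,y}^* \circ \tp$ uniformly on compact subsets of $\mathcal{W}(\tp^{-1}(x))$. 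I do not anticipate any serious obstacle here: the corollary is really a direct consequence of Proposition \ref{proposition:holonomiesareisometric} combined with the continuity of the topological conjugacy.
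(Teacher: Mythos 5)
Your proposal is correct and follows essentially the same route as the paper: the equivariance $h_{b_n,x}^*(ax)=a\,b_n x$ from Proposition \ref{proposition:holonomiesareisometric} gives convergence to $T_{x,y}^*$ since $\mathcal{W}^*(x)=\bold{W}x$, and conjugating by $\tp$ yields the second claim. You simply spell out the details the paper leaves implicit (equicontinuity on compact subsets of $\bold{W}$ and uniform continuity of $\tp^{\pm 1}$), which is fine.
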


\begin{proof}
By Proposition \ref{proposition:holonomiesareisometric}, if $a \in \bold{W}$
then $h_{b_n,x} ^* (a x) = a h_{b_n,x} ^* (x)  = a (b_n x)$, which converges
to $a y$ as desired. The first claim now follows since
$\mathcal{W}^*(x)=\bold{W}x$.
Conjugating by $\tp$ gives the second claim.
\QED
\end{proof}

\noindent{\bf Remark:} It is not clear that the limit of the $h_{b_n,x}^*$
can be realized as a holonomy of any kind along any leaf from $W^*(x)$
to $W^*(y)$.

\section{Limiting Argument} \label{limit argument}
The main goal of this section is to prove the following proposition,
which we will then use to complete the proof of Theorem \ref{nilmanifold}.
We retain the notations of the previous section.

\begin{proposition}
\label{smoothalongleaves}
For $\mu$-almost every $x \in \S$ there are smooth transitive actions
of $\bold{W}$ on the leaves $\mathcal{W}(x)$ and $\mathcal{W}^*(x)$
which are  intertwined by the conjugacy $\tp$.
\end{proposition}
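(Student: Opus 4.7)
The plan is to transport, via $\tp$, the transitive left-translation action of $\bold{W}$ on the algebraic leaf $\mathcal{W}^*(\tp(x))=\bold{W}\tp(x)$ to a continuous action on $\mathcal{W}(x)$, and then upgrade that topological action to a smooth one by realizing each of its elements as a limit of smooth $\mathcal{W}_1$-holonomies preserving a common normal form. Concretely, for $x\in\S$ and $a\in\bold{W}$ I set $\tau^x_a=\tp^{-1}\circ L_a\circ\tp$, where $L_a$ is left translation by $a$ restricted to $\mathcal{W}^*(\tp(x))$. By construction $a\mapsto\tau^x_a$ is automatically a continuous, transitive action of $\bold{W}$ on $\mathcal{W}(x)$, intertwined by $\tp$ with the algebraic action; the algebraic action on $\mathcal{W}^*(\tp(x))$ is itself manifestly smooth and transitive. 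The entire content of the proposition is therefore the smoothness of $\tau^x$.

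The first substantive step is to exploit the ergodicity of $\mathcal{W}_1$ with respect to $\mu$, which holds because $\mathcal{W}_1$ is the full stable foliation of some Anosov element. A Hopf-type argument gives, for $\mu$-a.e.\ $x\in\S$, that the $\bold{W}_1$-orbit of $\tp(x)$ accumulates on a dense subset of $\mathcal{W}^*(\tp(x))$; equivalently there is a dense subset $D\subseteq\bold{W}$ and, for each $a\in D$, a sequence $b^a_n\in\bold{W}_1$ with $b^a_n\tp(x)\to a\tp(x)$. Corollary \ref{corollary:linearlimits} then yields that the smooth holonomies $h_{b^a_n,x}$ converge to $\tau^x_a$ uniformly on compact subsets of $\mathcal{W}(x)$.

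The second step is to push the measurable normal form of Proposition \ref{normal form} through this limit. By Theorem \ref{lemma:normalformholonomyfullstable} each $h_{b^a_n,x}$ preserves the normal form, so in the fiber coordinates $\psi_z\colon\mathcal{W}(z)\to\R^m$ one has $\psi_{\tp^{-1}(b^a_n\tp(x))}\circ h_{b^a_n,x}\circ\psi_x^{-1}\in SR_\chi$. Restricting $x$ and $y=\tp^{-1}(a\tp(x))$ to a Lusin set of large $\mu$-measure on which $z\mapsto\psi_z$ is continuous, and arranging the ergodicity step so that the intermediate points $\tp^{-1}(b^a_n\tp(x))$ also approach the limit through this Lusin set, one has $\psi_{\tp^{-1}(b^a_n\tp(x))}\to\psi_y$ uniformly on compacts in the fiber. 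Hence $\psi_y\circ\tau^x_a\circ\psi_x^{-1}$ is a limit in $SR_\chi$; because $SR_\chi$ is a closed finite-dimensional Lie group of polynomial subresonance maps, the limit again lies in $SR_\chi$, and since $\psi_x,\psi_y$ are $\Ci$ on fibers this forces $\tau^x_a$ to be a $\Ci$ diffeomorphism of $\mathcal{W}(x)$ for every $a\in D$.

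Finally, the map $a\mapsto\tau^x_a$ is a continuous group homomorphism from $\bold{W}$ into the homeomorphism group of $\mathcal{W}(x)$ whose values on the dense subset $D$ lie in $SR_\chi$; by closedness of $SR_\chi$ and continuity it takes values in $SR_\chi$ everywhere, and a continuous homomorphism between Lie groups is automatically smooth, so one recovers a jointly smooth transitive action of $\bold{W}$ on $\mathcal{W}(x)$ intertwined by $\tp$ with the algebraic action, as required. The main obstacle is reconciling the only measurable dependence of $\psi$ on the base point with the need to pass smoothness through a convergent sequence of smooth holonomies: without the Lusin-set argument the coordinate changes $\psi_{\tp^{-1}(b^a_n\tp(x))}$ need not converge, and then the limit of the polynomial representatives in $SR_\chi$ carries no information. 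Arranging the ergodicity step so that approximating $b^a_n$ can always be chosen with $b^a_n\tp(x)$ in the continuity set of $\psi$ is the delicate point of the proof.
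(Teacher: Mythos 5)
Your overall strategy is the same as the paper's: Lusin sets on which the measurable normal forms of Proposition \ref{normal form} vary continuously, approximation of a translation of the algebraic leaf by holonomies $h_{b_n,x}$ via ergodicity of $\mathcal{W}_1$, preservation of normal forms (Theorem \ref{lemma:normalformholonomyfullstable}) passing to the limit inside the finite-dimensional group $SR_\chi$, and hence smoothness of the limiting maps. However, two points need attention. First, a fixable but real error: Corollary \ref{corollary:linearlimits} says the holonomies $h^*_{b_n,x}$ converge to $T^*_{x,y}(wx)=wy$, i.e.\ to \emph{right} translations in the identification $w\mapsto w\,\tp(x)$, not to the left translation $L_a$. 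Since $\bold{W}$ is only a nilpotent group and need not be abelian for a genuine nilmanifold, your map $\tau^x_a=\tp^{-1}\circ L_a\circ\tp$ is \emph{not} the limit of your holonomies when $y=a\tp(x)$; the limit is $w\tp(x)\mapsto w a\tp(x)$. The argument survives if you work throughout with the right-translation action (as the paper does), but as written the key convergence claim is false in the non-abelian case.

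The genuine gap is in your last step, extending smoothness from the dense set $D$ to all of $\bold{W}$ by ``closedness of $SR_\chi$ and continuity.'' The normal-form coordinates $\psi_z$ exist only $\mu$-almost everywhere and vary only measurably; for $a\notin D$ the image point $y_a=\tau^x_a(x)$ need not lie in, nor be approachable through, the Lusin set, so the expression $\psi_{y_a}\circ\tau^x_a\circ\psi_x^{-1}$ is not even defined, and taking limits $a_n\to a$ with $a_n\in D$ reproduces exactly the chart-convergence problem you were trying to avoid (now with no density-point structure at $y_a$). Likewise, ``a continuous homomorphism between Lie groups is smooth'' presupposes a well-defined continuous homomorphism into $SR_\chi$, which is precisely what has not been constructed. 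The paper closes this differently: since $\tp$ pushes the conditional measure of $\mu$ on $\mathcal{W}(x)$ to a measure equivalent to Haar on $\bold{W}\tp(x)$, the elements of $\bold{W}$ whose translations are conjugated by $\tp$ to diffeomorphisms form a subgroup of full Haar measure, hence all of $\bold{W}$, and joint smoothness of the resulting action then follows from Montgomery--Zippin \cite{Montgomery-Zippin}. Finally, the step you yourself flag as ``the delicate point'' --- choosing $b_n$ with $b_n\tp(x)$ converging to $y$ \emph{through} the continuity set --- is left unproved in your proposal; the paper obtains it by taking $y$ a density point of $\Lambda_m$ (so only almost every $y$, which is all that is then needed) together with ergodicity of the $\mathcal{W}_1$-foliation, which holds by unique ergodicity of full stable foliations \cite{BM}.
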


\begin{proof}
For any given $x \in \S$ we can naturally identify $\bold{W}$ with
$\mathcal{W}^*(x) =\bold{W}x $ by $w \mapsto wx$. In this identification,
we take the desired transitive action of $\bold{W}$ on $\mathcal{W}^*(x)$
to be the action by right translations. Corollary \ref{corollary:linearlimits}
means that the limits of the holonomy maps $h_{b_n,x} ^*$ are exactly
of this form. In fact, since $\mathcal{W}^*_1(x)$ is dense in the
corresponding $N/\Gamma$-fiber of the suspension as a full stable foliation of
Anosov element, one can see that any right translation can be obtained
as such a limit. While we will not use it directly, this provides motivation
for the argument.
The desired action of $\bold{W}$ on $\mathcal{W}^*(x)$ is
obtained by conjugating by $\tp$. This action is a priori only by
homeomorphisms and the goal is to prove that it is smooth.
For this we will study the limits of the holonomy maps $h_{b_n,x}$.

We consider Lusin sets where the measurable normal form on
the leaf $\mathcal{W}(x)$ depends continuously on $x$.
Let $\Lambda _m '$ be an increasing  sequence of such sets
with $\mu (\Lambda _m ') \to 1$. Let $\Lambda _m$ be the set
of  density points of $\Lambda _m '$, then $\mu (\Lambda _m) \rightarrow 1$.   Then there exists a subset $X \subset \Lambda = \cup \Lambda _m$
with $\mu (X)=1$ such that for all $x \in X$ the intersection
$\mathcal{W}(x) \cap \Lambda $ has full measure  with respect to the conditional measure of $\mu$ on $\mathcal{W}(x)$.

Fix any $x \in X$. Then for almost every $y$ in $\mathcal{W}(x)$
with respect to the conditional measure $x$ and $y$ belong to some
$ \Lambda _m$.   We pick a sequence $b_n$ of elements
in $\bold{W}_1$ with the following properties:
\begin{enumerate}
\item $x_n =b_n x \rightarrow y$
\item $x_n \in \Lambda_m$.
\end{enumerate}
To find such $b_n$ we use the fact that $y$ is a density point of $\Lambda_m$
and the fact that $\bold{W}_1$ acts ergodically with respect to $\mu$ on the
corresponding $N/\Gamma$-fiber of the suspension.
The ergodicity follows since the foliation $\mathcal{W}_1$ of $N/\Gamma$
is a full stable foliation of some Anosov element of $\a$  and hence is
uniquely ergodic by Bowen and Marcus \cite{BM}. Alternately, since
the push forward of $\mu$ by $\phi$ is Lebesgue, the ergodicity can
be checked on the algebraic side using the work of Auslander, Hahn,
and Green \cite{AGH}.

Each map $h_{b_n,x} $ is smooth and preserves the normal forms at
$x$ and $x_n$. By Corollary \ref{corollary:linearlimits} the sequence
$h_{b_n,x} $ converges to a homeomorphism
$T_{x,y}:  \mathcal{W} \to  \mathcal{W}$
conjugate by $\tp$ to the translation
$T^*_{\tp(x),\tp(y)}$ of $\mathcal{W}^*(\tp(x))$.
Since the normal form coordinates depend continuously on the base point
in $\Lambda_m$ and the maps $h_{b_n,x}$ in these coordinates belong to a fixed
Lie group, the limit  $T_{x,y}$ is smooth. Recall that the push forward of
$\mu$ by $\tp$ is the Lebesgue measure $\lambda$ and hence the
conditional measure of $\mu$ on $\mathcal{W}(x)$ is mapped by $\tp$
to the conditional measure of $\lambda$ on $\mathcal{W}^*(\tp(x))$,
which is equivalent to volume on $\mathcal{W}^*(\tp(x)) =\bold{W}\tp(x) $.
We conclude that for almost every element of $\bold{W}$ the
corresponding translation is conjugate by $\tp$ to a diffeomorphism of
$\mathcal{W}(x)$. Hence the subgroup of $\bold{W}$ that acts by
diffeomorphisms of $\mathcal{W}(x)$ has full measure and must be
the whole $\bold{W}$ by the next lemma. It now follows from
\cite[Section 5.1, Corollary]{Montgomery-Zippin} that the action of
$\bold{W}$ on $\mathcal{W}(x)$ is smooth. This completes the proof of
Proposition \ref{smoothalongleaves}.\QED
\end{proof}

\begin{lemma}
Let $G$ be a Lie group.  Then  any subgroup $H$ of full measure is $G$.
\end{lemma}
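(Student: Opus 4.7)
The plan is to exploit translation invariance of Haar measure on the Lie group $G = \bold{W}$ together with the closure of $H$ under the group operations. In the context where the lemma is applied, $G = \bold{W}$ is a simply connected nilpotent Lie group and the reference measure on $G$ (via the identification $w \mapsto w\tp(x)$) is equivalent to Haar measure, since the conditional of $\mu$ on $\mathcal{W}(x)$ is pushed by $\tp$ to the conditional of $\lambda$ on $\bold{W}\tp(x) \cong \bold{W}$, which is equivalent to volume. In particular, ``full measure'' means that $G \setminus H$ is a Haar-null set.

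The argument I would give is essentially a three-line manipulation. First, I would fix an arbitrary $g \in G$ and observe that the coset $gH$ also has full Haar measure: by left-invariance,
\[
\mu(G \setminus gH) \;=\; \mu\bigl(g(G \setminus H)\bigr) \;=\; \mu(G \setminus H) \;=\; 0.
\]
Second, since both $H$ and $gH$ are conull, their intersection $H \cap gH$ is conull as well, and in particular is nonempty (indeed $\bold{W}$ has positive Haar measure on every open set, so conull sets are not only nonempty but dense). Third, pick any element $h \in H \cap gH$ and write $h = g h'$ with $h' \in H$; then $g = h (h')^{-1} \in H$ because $H$ is closed under products and inverses. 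Since $g$ was arbitrary, $H = G$.

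There is essentially no obstacle here: the statement is a general measure-theoretic fact about subgroups in locally compact groups equipped with a quasi-invariant measure of full support, and the proof only needs left-invariance of the class of the reference measure under the group itself. The only small point worth checking is that the measure being used in Proposition \ref{smoothalongleaves} is indeed quasi-invariant under left translations by $\bold{W}$ so that $gH$ is conull whenever $H$ is; this is immediate from the identification of the conditional measure with (a density times) Haar measure on $\bold{W}$ noted in the preceding paragraph of the paper.
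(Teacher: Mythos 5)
Your proof is correct and is essentially the paper's argument: the paper simply notes that if $H \neq G$ the distinct cosets of $H$ would be disjoint sets of full measure, which is impossible, and your explicit use of left-invariance of Haar measure to show $gH$ is conull and must meet $H$ is just a direct (rather than contradiction-style) phrasing of the same mechanism. Nothing further is needed.
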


\begin{proof} If not then the distinct cosets of $H$ in $G$ are disjoint sets of full measure which is impossible. \QED
\end{proof}

\noindent{\bf Remark:} It is possible to prove that $G$ is smooth along a generic leaf of $\mathcal{W}$ using
older methods involving returns along Weyl chamber walls in $\R^k$ instead of holonomies.  However, one cannot obtain uniformity in estimates this way nor complete the proof below without using holonomies.

\vskip.3cm

{\em End of Proof of Theorem \ref{nilmanifold}}:
We need to show that $\phi$ is a diffeomorphism.
It will be easier to work with $\phi ^{-1}$ as we will employ certain
elliptic operators defined by right invariant vector fields to prove
smoothness of $\phi ^{-1}$.

Proposition \ref{smoothalongleaves} implies that for any coarse Lyapunov
foliation $\tp^{-1}$ intertwines transitive $\ci$ group actions on typical
leaves $\mathcal{W}(x)$ and $\mathcal{W}^*(x)$ in the suspension $\S$.
This yields that, for a typical $x$ in $M=N/\Gamma$, $\, \phi^{-1}$ intertwines
transitive $\ci$ group actions on $\mathcal{W}(x)$ and $\mathcal{W}^*(x)$.
Hence $\phi^{-1}$ is $\ci$ along $\mathcal{W}^*(x)$.

We claim that $\phi^{-1}$ is $\ci$ along {\em all} leaves of $\mathcal{W}^*$
and that all its derivatives along the leaves are continuous on $M$.
This follows from the fact that $TM= T\w \oplus T\w_1\oplus T\w_2$
and that the holonomies between different leaves of $\w$ along $\w_1$
and $\w_2$
are smooth and intertwine the restriction of $\phi^{-1}$ to these leaves.

We can now finish the proof quickly.  We know that $\phi ^{-1}$ is smooth along the coarse Lyapunov foliations with continuous dependence of  the derivatives.  This simply says that derivatives of all orders exist and are continuous for each right invariant vectorfield tangent to a coarse Lyapunov foliations (while mixed derivatives may fail to exist).  Pick a basis of such vectofields $X_i$.  Then $X^l:= \sum _i  X_i ^{2l}$ for any $l$ is an elliptic operator of order 2l.  It follows that $X^l (\phi ^{-1})$ is smooth for all $l$. Hence by elliptic theory, $\phi ^{-1}$ is $\ci$.  We refer to [5, Section 7.1]  e.g. for a more detailed discussion of this elliptic theory argument.

 It remains to show that $\phi^{-1}$ is a diffeomorphism. Since $\phi ^{-1}$ is already a homeomorphism,
this follows once we show that the differential of $\phi^{-1}$ is everywhere non-degenerate.   This  follows easily from
Proposition \ref{mu}.  Indeed,  we have $\mu = \phi^{-1} _\ast (\lambda)$
and $\mu$ has smooth positive density.
\QED

 \section{Totally reducible  actions and examples.}
\label{section:generalresults}

Here we will prove Corollary 1.2.  By the proposition below, this is  immediate from Theorem 1.1.

Recall that an   algebraic  $\mathbb Z^k$ action on a torus  is called {\em irreducible}
if there is no rational invariant subtorus, and {\em totally reducible} if
every rational invariant subtorus has a rational invariant complement.

Given a nilmanifold $N/\Gamma$, there is a maximal toral quotient $\mathbb
T^d$ obtained by taking $N/[N,N]\Gamma$.  Any action by
automorphisms on $N/\Gamma$ descends to an action on $\mathbb T^d$,
which we refer to as the {\em maximal toral quotient action}.
We say
that an   algebraic  $\mathbb Z^k$ action on $N/\Gamma$ is {\em totally
reducible} if the maximal toral quotient action is totally reducible
and there is a $\Z^k$  invariant complement to $[\mathfrak n, \mathfrak n]$
in the Lie algebra $\mathfrak n$ of $N$.

 We call an action by affine automorphisms of a nilmanifold
{\em totally reducible}
if the finite index subgroup that acts by automorphisms is totally reducible.

It is easy to see that semisimple actions are totally reducible.

\begin{Proposition}
A totally reducible $\Z^k$ action on a nilmanifold is semisimple.
\end{Proposition}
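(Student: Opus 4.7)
The plan is to reduce to the case that $\mathbb{Z}^k$ acts on $N/\Gamma$ by automorphisms. By definition, total reducibility of an affine action is total reducibility of the finite-index automorphism subgroup, so once I show that that subgroup acts semisimply, the full action is semisimple as well: for any $g\in\mathbb{Z}^k$, some power $g^n$ lies in the automorphism subgroup, and its linear part is $A^n$ where $A$ is the linear part of $g$; since $A$ is invertible, $A^n$ semisimple forces $A$ semisimple (write the Jordan decomposition $A=A_sA_u$, observe that $A^n=A_s^nA_u^n$ is the Jordan decomposition of $A^n$, and conclude $A_u=1$). From here on assume $\mathbb{Z}^k$ acts by automorphisms and let $A$ denote the linear part of a typical element acting on the Lie algebra $\mathfrak{n}$. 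Using the hypothesis, write $\mathfrak{n}=V\oplus[\mathfrak{n},\mathfrak{n}]$ with $V$ a $\mathbb{Z}^k$-invariant complement, and complexify to $\mathfrak{n}_{\mathbb{C}}=V_{\mathbb{C}}\oplus[\mathfrak{n}_{\mathbb{C}},\mathfrak{n}_{\mathbb{C}}]$. The argument splits into a toral step on $V_{\mathbb{C}}$ and a Lie-algebra bootstrap to all of $\mathfrak{n}_{\mathbb{C}}$.

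For the toral step, let $\mathcal{A}\subset\mathrm{End}_{\mathbb{Q}}(V_{\mathbb{Q}})$ be the commutative $\mathbb{Q}$-subalgebra generated by the image of $\mathbb{Z}^k$. Invariant subspaces of $V_{\mathbb{Q}}$ for $\mathbb{Z}^k$ and for $\mathcal{A}$ coincide, so total reducibility of the toral quotient action makes $V_{\mathbb{Q}}$ a semisimple $\mathcal{A}$-module. Decompose $V_{\mathbb{Q}}=\bigoplus W_i$ into simple $\mathcal{A}$-summands; commutativity of $\mathcal{A}$ together with Schur's lemma forces the image of $\mathcal{A}$ in $\mathrm{End}(W_i)$ to be a field $K_i$, a finite extension of $\mathbb{Q}$, and $W_i$ to be one-dimensional over $K_i$. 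Each $a\in\mathcal{A}$ acts on $W_i$ by multiplication by some $\bar a\in K_i$, and after extension of scalars
\[
K_i\otimes_{\mathbb{Q}}\mathbb{C}\;\cong\;\prod_{\sigma\colon K_i\hookrightarrow\mathbb{C}}\mathbb{C},
\]
multiplication by $\bar a$ becomes the diagonal operator with entries $\sigma(\bar a)$, which is semisimple. Hence every $g\in\mathbb{Z}^k$ acts semisimply on $V_{\mathbb{C}}$.

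For the bootstrap, fix $g\in\mathbb{Z}^k$ and choose a basis $\{v_1,\dots,v_r\}$ of $V_{\mathbb{C}}$ with $gv_i=\lambda_iv_i$. Because $V$ projects isomorphically onto $\mathfrak{n}/[\mathfrak{n},\mathfrak{n}]$, a standard argument for nilpotent Lie algebras says $V_{\mathbb{C}}$ generates $\mathfrak{n}_{\mathbb{C}}$ as a Lie algebra, so iterated brackets of the $v_i$ span $\mathfrak{n}_{\mathbb{C}}$. Since $g$ acts on $\mathfrak{n}_{\mathbb{C}}$ by a Lie algebra automorphism,
\[
g\cdot[v_{i_1},[v_{i_2},\dots,v_{i_s}]\dots]\;=\;\lambda_{i_1}\lambda_{i_2}\cdots\lambda_{i_s}\cdot[v_{i_1},[v_{i_2},\dots,v_{i_s}]\dots],
\]
so every such iterated bracket is itself an eigenvector of $g$. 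This exhibits a spanning set of eigenvectors for $g$ on $\mathfrak{n}_{\mathbb{C}}$, so $g$ is diagonalizable there, completing the argument.

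The main obstacle is the toral step: one must not confuse semisimplicity of $V_{\mathbb{Q}}$ as an $\mathcal{A}$-module (a statement about invariant subspaces) with semisimplicity of individual operators $g$ over $\mathbb{C}$ (what is actually needed). The commutative-algebra route above, which factors each simple summand as a number field acting on itself and then diagonalizes using the embeddings into $\mathbb{C}$, is the cleanest way I see to bridge that gap. The Lie-algebra bootstrap, by contrast, is essentially automatic once one notices that a Lie algebra automorphism sends an iterated bracket of eigenvectors to an eigenvector whose eigenvalue is the product of the constituent eigenvalues.
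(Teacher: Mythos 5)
Your proof is correct, and the overall architecture matches the paper's (a toral step followed by the bracket bootstrap, using that eigenvectors in a complement to $[\mathfrak n,\mathfrak n]$ generate $\mathfrak n$ and that an automorphism sends iterated brackets of eigenvectors to eigenvectors), but your toral step takes a genuinely different route. The paper decomposes the totally reducible torus action into irreducible components and then, for an irreducible action, works with the rational subspace $E(A)=\ker\prod P_i(A)$ spanned by the eigenspaces of each integer matrix $A$; an induction over the commuting family shows $\bigcap_i E(A_i)$ is a nontrivial rational invariant subspace, which by irreducibility forces every $A_i$ to be semisimple. You instead pass to the commutative $\Q$-algebra $\mathcal A$ generated by the action, use total reducibility to see the rational representation is a semisimple $\mathcal A$-module, and apply Schur's lemma for commutative algebras to identify each simple summand with a number field $K_i$ acted on by multiplication, so that complexifying via the embeddings of $K_i$ diagonalizes every element at once. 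The paper's argument is more elementary (bare linear algebra over $\Q$, no module theory), while yours is more structural: it avoids the induction over commuting generators, treats all elements of $\Z^k$ simultaneously, and exhibits the eigenvalues as Galois conjugates; your explicit reduction from affine elements to the finite-index automorphism subgroup via powers and the multiplicative Jordan decomposition is also a detail the paper leaves implicit. One small presentational caveat: the invariant complement $V$ is not assumed rational, so the object carrying the $\Q$-structure is really $\mathfrak n/[\mathfrak n,\mathfrak n]$ (equivalently the first rational homology underlying the maximal toral quotient); since $V$ projects $\Z^k$-equivariantly and isomorphically onto it, semisimplicity transfers to $V_{\C}$, but the algebra $\mathcal A$ should be formed on the quotient rather than on ``$V_{\Q}$''.
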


\begin{proof} First we consider an irreducible torus action.  Let $A$ be a toral
automorphism, i.e. an integral matrix.
The characteristic polynomial of $A$ splits over
$\Q$ as $\prod P_i(X)^{d_i}$.  Then the kernel $E(A)$ of $\prod P_i(A)$
is the subspace spanned by the eigenspaces of $A$.  It is rational as
the kernel of a rational operator.

If a collection $A_i$ of toral automorphisms commute then
$E(A_1)$ is invariant under $A_2$.  Consider  the restriction $B_2$ of $A_2$ to $E(A_1)$  Then $E(B_2)$ is nontrivial, and contained in
$E(A_1) \cap E(A_2)$.  Inductively we see that  $\cap E(A_i)$ is nontrivial.  Thus we get a nontrivial rational subspace invariant under all $A_i$.
This defines an invariant proper subtorus unless all $A_i$ are semisimple.
Hence irreducible torus actions are semisimple.

Considering irreducible components of totally reducible torus actions
it follows easily that they are also semisimple.

Finally consider a totally reducible action on a nilmanifold.  Then the maximal toral quotient action is totally reducible and hence semisimple.  This implies that the action on the invariant complement $\R^d$ to $[\mathfrak n, \mathfrak n]$
is semisimple.  Since joint eigenvectors for $\Z^k$ span $\R^d$, their brackets, which are also eigenvectors
span $\mathfrak n$.  Therefore the action is semisimple.
\QED
\end{proof}

We briefly describe many examples of totally irreducible Anosov actions on nilmanifolds.  These examples are
more general variants of examples constructed by Qian in \cite{Qi}.  Let $\T^d$ be a torus with an Anosov  algebraic semisimple $\Z^k$ action.  The action lifts to the vector space $\R^d$.  Let $N=N^k(\R^d)$ be the $k$-step free
nilpotent Lie group generated by $\R^d$.  (It is somewhat more typical to define this at the level of Lie
algebras, but the meaning is clear as long as we assume $N^k(\R^d)$ is simply connected.)  The $\Z^k$ action on $\R^d$
extends canonically to a $\Z^k$ action on $N^k(\R^d)$ and preserves the obvious rational structure on that group.
This implies that we have a well-defined $\Z^k$ action on $N/\Gamma$ where $\Gamma$ is a lattice in $N$.

It is easy to check that generically this construction takes an Anosov $\Z^k$ action on $\T^d$ and lifts it to an Anosov action on $N/\Gamma$.  An Anosov automorphism $A$ of $\T^d$ lifts to an Anosov automorphism of $N/\Gamma$ as long
as no product of length at most $k$ of eigenvalues of $A$ has modulus one.  It is straightforward to construct
many examples which are also $TNS$ using similar algebraic condition on eigenvalues.

We remark that the hypothesis of Theorem \ref{nilmanifold} are necessary for our argument as there are examples for which the commuting holonomies are not ergodic.

\begin{example}
Take a semisimple Anosov linear action of $\Z^k$ on $\T^d$, we can define an action on $\T^{2d}$ by letting $A\in \Z^k$ act by
$A(x,y)=(Ax, Ay + x)$.  It is straightforward to check that for examples of this kind, the commuting holonomies
are not ergodic.
\end{example}






\bibliographystyle{alpha}

\end{document}